\DeclareMathOperator{\sgn}{sgn}
\newtheorem{theorem}{\scshape \mdseries Theorem}[section]
\newtheorem{corollary}[theorem]{\scshape \mdseries Corollary}
\newtheorem{definition}[theorem]{\scshape \mdseries Definition}
\newtheorem{lemma}[theorem]{\scshape \mdseries Lemma}
\newenvironment{proof}[1][Proof]{\noindent \textbf{#1.} }{\hfill  \rule{0.5em}{0.5em}}
\def \v{\mathbf{v}}
\def \u{\mathbf{u}}
\def \x{\mathbf{x}}
\def \i{\mathbf{i}}
\def \A{\mathcal{A}}
\def \L{\mathcal{L}}
\def \Q{\mathcal{Q}}
\def \T{\mathcal{T}}
\def \lamin{\lambda_{\min}}
\def \lamax{\lambda_{\max}}
\def \la{\lambda}
\def \Go{G^{\large \mathtt  o}}
\begin{document}

\title{\sf The $H$-spectrum of a generalized power hypergraph\thanks{Supported by the National Natural Science Foundation of China (11371028),
Scientific Research Fund for Fostering Distinguished Young Scholars of Anhui University(KJJQ1001),
Academic Innovation Team of Anhui University Project (KJTD001B).}}
\author{Murad-ul-Islam Khan, Yi-Zheng Fan\thanks{Corresponding author. E-mail addresses: fanyz@ahu.edu.cn(Y.-Z. Fan),
muradulislam@foxmail.com (M. Khan)} \\
{\small \it School of Mathematical Sciences, Anhui University, Hefei 230601, P. R. China} \\
}
\date{}
\maketitle

\textbf{Abstract}:
The generalized power of  a simple graph $G$, denoted by $G^{k,s}$, is obtained from $G$ by blowing up each vertex into an $s$-set and each edge
into a $k$-set, where $1 \le s \le \frac{k}{2}$. When $s < \frac{k}{2}$,   $G^{k,s}$ is always odd-bipartite.
It is known that $G^{k,{k \over 2}}$ is non-odd-bipartite if and only if $G$ is non-bipartite, and $G^{k,{k \over 2}}$ has the same adjacency (respectively, signless Laplacian) spectral radius as $G$.
In this paper, we prove that,  regardless of multiplicities,  the $H$-spectrum of $\A(G^{k,\frac{k}{2}})$ (respectively, $\Q(G^{k,\frac{k}{2}})$)
 consists of all eigenvalues of the adjacency matrices (respectively, the signless Laplacian matrices) of the connected induced subgraphs (respectively, modified induced subgraphs) of $G$.
 As a corollary, $G^{k,{k \over 2}}$ has the same least adjacency (respectively, least signless Laplacian) $H$-eigenvalue as $G$.
We also discuss the limit points of the least adjacency $H$-eigenvalues of hypergraphs, and construct a sequence of non-odd-bipartite hypergraphs whose
least adjacency $H$-eigenvalues converge to $-\sqrt{2+\sqrt{5}}$.

\textbf{Keywords}: Hypergraph; adjacency tensor; signless Laplacian tensor; least eigenvalue; limit point

\section{Introduction}
A {\it hypergraph} $G=(V(G),E(G))$ consists of  a set of vertices, say $V(G)=\{v_1,v_2,\ldots,v_n\}$, and a set of edges, say $E(G)=\{e_{1},e_2,\ldots,e_{m}\}$, where $e_{j}\subseteq V(G)$.
If $|e_{j}|=k$ for each $j=1,2,\ldots,m$, then $G$ is called a {\it $k$-uniform} hypergraph.
In particular, the $2$-uniform hypergraphs are exactly the classical simple graphs.
For a $k$-uniform hypergraph $G$, if we add to $G$ some edges with cardinality less than $k$, the resulting hypergraph denoted by $\Go$ is one with loops;
and those added edges are called the {\it loops} of $\Go$.
The {\it degree} $d_v(G)$ of a vertex $v \in V(G)$ is defined as $d_v(G)=|\{e_{j}:v\in e_{j}\in E(G)\}|$.
Sometimes we simply write $d_v(G)$ as $d_v$ if there exists no confusion.
So, a loop contributes $1$ to the degree of the vertex it is attached to.
 A {\it walk} $W$ of length $l$ in $G$ is a sequence of alternate vertices and edges: $v_{0},e_{1},v_{1},e_{2},\ldots,e_{l},v_{l}$,
    where $\{v_{i},v_{i+1}\}\subseteq e_{i}$ for $i=0,1,\ldots,l-1$.
The hypergraph $G$  is {\it connected} if every two vertices of $G$ are connected by a walk.

In recent years spectral hypergraph theory  has emerged as an important field in algebraic graph theory.
Let $G$ be a $k$-uniform hypergraph.
The {\it adjacency tensor} $\mathcal{A}=\mathcal{A}(G)=(a_{i_{1}i_{2}\ldots i_{k}})$ of $G$ is a $k$th order $n$-dimensional symmetric tensor,
  where $a_{i_{1}i_{2}\ldots i_{k}}=\frac{1}{(k-1)!}$ if $\{v_{i_{1}},v_{i_{2}},\ldots,v_{i_{k}}\} \in E(G)$, and $a_{i_{1}i_{2}\ldots i_{k}}=0$ otherwise.
   Let $\mathcal{D}=\mathcal{D}(G)$ be a $k$th order $n$-dimensional diagonal tensor,
    where $d_{i\ldots i}=d_{v_i}(G)$ for all $i \in [n]:=\{1,2,\ldots,n\}$.
Then $\L=\L(G)=\mathcal{D}(G)-\A(G)$ is the {\it Laplacian tensor} of the hypergraph $G$,
and $\Q=\Q(G)=\mathcal{D}(G)+\A(G)$ is the {\it signless Laplacian tensor} of $G$.
If $k=2$, the above tensors are the classical matrices of simple graphs.
The spectral radius (or the least $H$-eigenvalue) of the adjacency, Laplacian and signless Laplacian tensor of $G$ are denoted respectively by
$\rho^\A(G),\rho^\L(G),\rho^\Q(G)$ (or  respectively by $\lamin^\A(G),  \lamin^\L(G), \lamin^\Q(G)$).

For a hypergraph $\Go$ with loops, the adjacency tensor of $\Go$ is defined as the same as that of $G$, i.e. $\A(\Go)=\A(G)$.
The Laplacian tensor and the signless Laplacian tensor are defined by $\L(\Go)=\mathcal{D}(\Go)-\A(G)$ and $\Q(\Go)=\mathcal{D}(\Go)+\A(G)$, respectively.
So, even if $\Go$ is not uniform, the adjacency, Laplacian and signless Laplacian tensor of $\Go$ are all $k$th order $n$-dimensional tensors.

The spectral radius (or the largest $H$-eigenvalue) of the adjacency or signless Laplacian tensor of a uniform hypergraph has enjoyed a lot of research exposure;
see \cite{CD,HQX,HQS,LM,PZ,Qi,SSW}.
However, the least $H$-eigenvalue received little attention.
Nikiforov \cite{niki} gave a lower bound of $\lamin^\A(G)$ for an even uniform hypergraph $G$ in terms of order and size.
In fact he reduced the problem to discussing an odd-bipartite hypergraph; see \cite[Theorem 8.1]{niki}.
Here an  even uniform hypergraph $G$ is called {\it odd-bipartite} if $V(G)$ has a bipartition $V(G)=V_{1}\cup V_{2} $
   such that each edge has an odd number of vertices in both $V_{1}$ and $V_{2}$.
Shao et al. \cite{SSW} proved that the adjacency $H$-spectrum (or the adjacency spectrum) of $G$ is symmetric with respect to the origin if and only if $k$ is even and $G$ is odd-bipartite.
So, if $G$ is an odd-bipartite even uniform hypergraph, then $\lamin^\A(G)=-\rho^\A(G)$.

Qi \cite{Qi} showed that $\rho^\L(G) \le \rho^\Q(G)$, and posed a question of identifying the conditions under which the equality holds.
Hu et al. \cite{HQX} proved that if $G$ is connected, then the equality holds if and only if $k$ is even and $G$ is odd-bipartite.
Shao et al. \cite{SSW} proved a stronger result that the Laplacian $H$-spectrum (respectively, Laplacian spectrum) and the signless Laplacian $H$-spectrum (respectively, signless Laplacian spectrum) of a connected $k$-uniform hypergraph $G$ are equal if and only if $k$ is even and $G$ is odd-bipartite.
So, for an even $k$ and a connected $k$-uniform hypergraph $G$, if $G$ is odd-bipartite, then $\lamin^\Q(G)=\lamin^\L(G)=0$.

So, if we discuss the least $H$-eigenvalue of the adjacency or signless Laplacian tensor of a connected even uniform hypergraph,
it suffices to consider non-odd-bipartite hypergraphs.
Up to now, most known examples of hypergraphs are odd-bipartite.
Hu, Qi, Shao \cite{HQS} introduced the {\it cored hypergraphs} and the {\it power hypergraphs},
   where the cored hypergraph is one such that each edge contains at least one vertex of degree $1$,
   and the $k$th power of a simple graph $G$, denoted by $G^k$, is obtained by replacing each edge (a $2$-set) with a $k$-set by adding $k-2$ new vertices.
These two kinds of hypergraphs are both odd-bipartite.
Peng \cite{P} introduced $s$-paths and $s$-cycles, which are both $k$-uniform hypergraphs.
An $s$-path is always odd-bipartite \cite{KF}. But this does not hold for $s$-cycles.
When $1\leq s<\frac{k}{2}$, an $s$-cycle is odd-bipartite; and when $s=k/2$, it is odd-bipartite if and only if it has an even length.

We \cite{KF} introduced a generalized power hypergraph $G^{k,s}$ from a simple graph $G$, where $1 \le s \le k/2$.
If $s<k/2$, then $G^{k,s}$ is odd-bipartite; and $G^{k,k/2}$ ($k$ being even) is non-odd-bipartite if and only if $G$ is non-bipartite \cite{KF}.
So we can construct non-odd-bipartite hypergraphs from non-bipartite simple graphs.
In the paper \cite{KF}, we proved that $\rho^\A(G)=\rho^\A(G^{k,{\frac{k}{2}}})$ and $\rho^\Q(G)=\rho^\Q(G^{k,{\frac{k}{2}}})$.
%We wonder whether the equalities also hold for the $\lamin^\A(G)$ and $\lamin^\Q(G)$.
In Section 3 we give a relationship between the spectrum of $\A(G)$ (respectively, $\Q(G)$) and the $H$-spectrum of $\A(G^{k,{\frac{k}{2}}})$ (respectively, $\Q(G^{k,{\frac{k}{2}}})$).
That is, regardless of multiplicities,  the $H$-spectrum of $\A(G^{k,\frac{k}{2}})$ (respectively, $\Q(G^{k,\frac{k}{2}})$)
 consists of all eigenvalues of the adjacency matrices (respectively, the signless Laplacian matrices) of the connected induced subgraphs (respectively, modified induced subgraphs) of $G$; see Theorem \ref{main2}.
 As a corollary, we show that $\lamin^\A(G)=\lamin^\A(G^{k,{\frac{k}{2}}})$ and $\lamin^\Q(G)=\lamin^\Q(G^{k,{\frac{k}{2}}})$.
We also discuss the limit points of the least adjacency $H$-eigenvalues of hypergraphs, and construct a sequence of non-odd-bipartite hypergraphs whose
least adjacency $H$-eigenvalues converge to $-\sqrt{2+\sqrt{5}}$.

\section{Preliminaries}
For integers $k\geq 1$ and $n\geq 2$,
  a real {\it tensor} (also called {\it hypermatrix}) $\mathcal{T}=(t_{i_{1}\ldots i_{k}})$ of order $k$ and dimension $n$ refers to a
  multidimensional array with entries $t_{i_{1}\ldots i_{k}}$ such that $t_{i_{1}\ldots i_{k}}\in \mathbb{R}$ for all $i_{j}\in [n]$ and $j\in [k]$.
 The tensor $\mathcal{T}$ is called \textit{symmetric} if its entries are invariant under any permutation of their indices.
 A {\it subtensor} of $\mathcal{T}$ is a multidimensional array with entries $t_{i_{1}\ldots i_{k}}$ such that $i_j \in S_j \subseteq [n]$ for some $S_j$'s and $j \in [k]$,
denoted by $\mathcal{T}[S_1|S_2|\cdots|S_k]$.
If $S_1=S_2=\cdots=S_k=:S$, then we simply write $\mathcal{T}[S_1|S_2|\cdots|S_k]$ as $\mathcal{T}[S]$, which is called the {\it principal subtensor} of $\mathcal{T}$.
If $k=2$, then $\mathcal{T}[S]$ is exactly the principal submatrix of $\mathcal{T}$;
and if $k=1$, then $\mathcal{T}[S]$ is the subvector of $\mathcal{T}$.

 Given a vector $x\in \mathbb{R}^{n}$, $\mathcal{T}x^{k}$ is a real number, and $\mathcal{T}x^{k-1}$ is an $n$-dimensional vector, which are defined as follows:
   $$\mathcal{T}x^{k}=\sum_{i_1,i_{2},\ldots,i_{k}\in [n]}t_{i_1i_{2}\ldots i_{k}}x_{i_1}x_{i_{2}}\cdots x_{i_k},~
   (\mathcal{T}x^{k-1})_i=\sum_{i_{2},\ldots,i_{k}\in [n]}t_{ii_{2}\ldots i_{k}}x_{i_{2}}\cdots x_{i_k} \mbox{~for~} i \in [n].$$
 Let $\mathcal{I}$ be the {\it identity tensor} of order $k$ and dimension $n$, that is, $i_{i_{1}i_2 \ldots i_{k}}=1$ if and only if
   $i_{1}=i_2=\cdots=i_{k} \in [n]$ and $i_{i_{1}i_2 \ldots i_{k}}=0$ otherwise.

\begin{definition}{\em \cite{Qi2}} Let $\mathcal{T}$ be a $k$th order n-dimensional real tensor.
For some $\lambda \in \mathbb{C}$, if the polynomial system $(\lambda \mathcal{I}-\mathcal{T})x^{k-1}=0$, or equivalently $\mathcal{T}x^{k-1}=\lambda x^{[k-1]}$, has a solution $x\in \mathbb{C}^{n}\backslash \{0\}$,
then $\lambda $ is called an eigenvalue of $\mathcal{T}$ and $x$ is an eigenvector of $\mathcal{T}$ associated with $\lambda$,
where $x^{[k-1]}:=(x_1^{k-1}, x_2^{k-1},\ldots,x_n^{k-1}) \in \mathbb{C}^n$.
\end{definition}

If $x$ is a real eigenvector of $\mathcal{T}$, surely the corresponding eigenvalue $\lambda$ is real.
In this case, $x$ is called an {\it $H$-eigenvector} and $\lambda$ is called an {\it $H$-eigenvalue}.
The {\it $H$-spectrum} of $\mathcal{T}$ is the multi-set of $H$-eigenvalues of $\mathcal{T}$ (counting multiplicities).
%If $\lambda \in \mathbb{R}$ is an eigenvalue of $\mathcal{T}$ with multiplicity $1,$ we say that it is \textit{simple}.
Furthermore, if $x\in \mathbb{R}_{+}^{n}$ (the set of nonnegative vectors of dimension $n$), then $\lambda $ is called an {\it $H^{+}$-eigenvalue} of $\mathcal{T}$;
if $x\in \mathbb{R}_{++}^{n}$ (the set of positive vectors of dimension $n$), then $\lambda$ is said to be an {\it $H^{++}$-eigenvalue} of $\mathcal{T}$.
The smallest $H$-eigenvalue and the largest $H$-eigenvalue of $\T$ are denoted by $\lamin(\T)$ and $\lamax(\T)$, respectively.
If an eigenvector $x$ of $\mathcal{T}$ cannot be scaled to be real, then it is called an {\it $N$-eigenvector}.
The {\it spectral radius of $\T$} is defined as
$$\rho(\T)=\max\{|\lambda|: \lambda \mbox{ is an eigenvalue of } \T \}.$$
Surely, by Theorem \ref{PF} presented later on, if $\T$ is nonnegative, then $\rho(\T)=\lamax(\T)$.

Chang et al. \cite{CPZ} introduced the irreducibility of tensors. A tensor $\T=(t_{i_{1}...i_{k}})$ of order $k$ and dimension $n$ is called {\it reducible} if there exists a nonempty proper subset $I \subsetneq [n]$ such that
$t_{i_1i_2\ldots i_k}=0$ for any $i_1 \in I$ and any $i_2,\ldots,i_k \notin I$.
If $\T$ is not reducible, then it is called {\it irreducible}.
Friedland et al. \cite{FGH} proposed a weak version of irreducibility.
The graph associated with $\T$, denoted by $G(\T)$, is the directed graph with vertices $1, \ldots, n$ and an edge from $i$ to $j$
 if and only if $t_{ii_2\ldots i_k}>0$ for some $i_l = j$, $l \in \{ 2, 3, \ldots, k\}$.
The tensor $\T$ is called {\it weakly irreducible} if $G(\T)$ is strongly connected.
Surely, an irreducible tensor is always weakly irreducible.
Pearson and Zhang \cite{PZ} proved that the adjacency tensor of a uniform hypergraph $G$ is weakly irreducible if and only if $G$ is connected.
Clearly, this shows that if $G$ is connected, then $\mathcal{A}(G), \mathcal{L}(G)$ and $\mathcal{Q}(G)$ are all weakly irreducible.

\begin{theorem}\label{PF} {\em \bf (The Perron-Frobenius Theorem for nonnegative tensors)}

$(1)$ {\em (Yang and Yang 2010 \cite{YY})} If $\T$ is a nonnegative tensor of order $k$ and dimension $n$, then $\rho(\T)$ is an $H^+$-eigenvalue of $\T$.

$(2)$ {\em (Friedland, Gaubert and Han 2013 \cite{FGH})} If furthermore $\T$ is weakly irreducible, then $\rho(\T)$ is the unique $H^{++}$-eigenvalue of $\T$,
with the unique eigenvector $x \in \mathbb{R}_{++}^{n}$, up to a positive scaling coefficient.

$(3)$ {\em(Chang, Pearson and Zhang 2008 \cite{CPZ})} If moreover $\T$ is irreducible, then $\rho(\T)$ is the unique $H^{+}$-eigenvalue of $\T$,
with the unique eigenvector $x \in \mathbb{R}_{+}^{n}$, up to a positive scaling coefficient.

\end{theorem}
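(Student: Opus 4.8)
The plan is to establish the three parts in increasing order of strength, using a nonlinear fixed-point construction for existence and a single monotonicity-and-support argument (the tensorial analogue of Collatz--Wielandt) for the positivity and uniqueness claims. Throughout I would exploit the two structural features of the map $x \mapsto \T x^{k-1}$ induced by a nonnegative $\T$: it is \emph{monotone} on the nonnegative cone (if $0 \le x \le y$ componentwise then $\T x^{k-1} \le \T y^{k-1}$) and it is \emph{homogeneous} of degree $k-1$.

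For Part (1) I would first treat a strictly positive tensor. On the standard simplex $\Delta = \{x \in \mathbb{R}_+^n : \sum_i x_i = 1\}$ define the continuous self-map $F(x)_i = (\T x^{k-1})_i^{1/(k-1)} \big/ \sum_j (\T x^{k-1})_j^{1/(k-1)}$, which is well defined because $\T x^{k-1} > 0$ for positive $\T$ and $x \in \Delta$; Brouwer's fixed-point theorem then furnishes a fixed point, which unwinds to a positive eigenvector with a positive eigenvalue. For a general nonnegative $\T$ I would apply this to the perturbed positive tensors $\T_\varepsilon = \T + \varepsilon \mathcal{E}$, with $\mathcal{E}$ the all-ones tensor, obtaining positive eigenpairs $(\la_\varepsilon, x_\varepsilon)$ with $x_\varepsilon \in \Delta$. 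A comparison argument identifies $\la_\varepsilon = \rho(\T_\varepsilon)$: for any eigenpair $(\mu, y)$ of $\T_\varepsilon$, put $w = |y|$, so that $\T_\varepsilon w^{k-1} \ge |\mu|\, w^{[k-1]}$ by the triangle inequality, and evaluating at the index $j$ maximizing $w_i/(x_\varepsilon)_i$ together with monotonicity and homogeneity yields $|\mu| \le \la_\varepsilon$. Passing to the limit $\varepsilon \to 0^+$ along a convergent subsequence (using compactness of $\Delta$ and continuity of the eigenvalues in the tensor entries) produces a nonnegative eigenpair $(\la_0, x_0)$ with $\la_0 = \rho(\T)$, so $\rho(\T)$ is an $H^+$-eigenvalue.

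For Part (2), under weak irreducibility I would first upgrade $x_0$ to a strictly positive vector. If $S = \{i : (x_0)_i = 0\}$ were a nonempty proper subset, then for $i \in [n]\setminus S$ the eigen-equation and the nonnegativity of every summand would force $t_{i i_2 \cdots i_k} = 0$ whenever some $i_j \in S$, so $G(\T)$ would have no arc from $[n]\setminus S$ into $S$, contradicting strong connectivity; hence $x_0 \in \mathbb{R}_{++}^n$ and $\rho(\T)$ is an $H^{++}$-eigenvalue. For uniqueness, given two positive eigenvectors $x,y$ for $\rho(\T)$, I would scale so that $y \le x$ with equality exactly on a nonempty set $K$; monotonicity forces $(\T x^{k-1})_i = (\T y^{k-1})_i$ for $i \in K$, and nonnegativity of each summand then gives $t_{i i_2 \cdots i_k} = 0$ whenever $i \in K$ and some $i_j \notin K$, i.e. no arc of $G(\T)$ leaves $K$; strong connectivity forces $K = [n]$, so $x = y$. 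The same monotone comparison shows every positive eigenvector has eigenvalue $\rho(\T)$, giving uniqueness of the $H^{++}$-eigenvalue up to positive scaling.

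For Part (3) I would note that irreducibility implies weak irreducibility, so Part (2) already supplies the unique positive eigenpair; it remains only to rule out a nonnegative eigenvector with a zero coordinate. If $(\mu, z)$ has support $J := \{i : z_i > 0\}$ a nonempty proper subset, then for each $i \notin J$ the identity $(\T z^{k-1})_i = \mu z_i^{k-1} = 0$ together with nonnegativity gives $t_{i i_2 \cdots i_k} = 0$ for all $i_2, \ldots, i_k \in J$; taking $I = [n] \setminus J$, this is precisely the reducibility condition, a contradiction. Hence every nonnegative eigenvector is positive, and Part (2) forces $\mu = \rho(\T)$, so $\rho(\T)$ is the unique $H^+$-eigenvalue. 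The step I expect to be the main obstacle is the existence in Part (1): making the perturbation-and-limit argument rigorous while simultaneously pinning down $\la_0 = \rho(\T)$, since the limiting eigenvector may a priori lie on the boundary of the cone and the identification of $\la_\varepsilon$ with $\rho(\T_\varepsilon)$ must be carried out without any irreducibility hypothesis.
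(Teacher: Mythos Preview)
The paper does not prove this theorem; it is stated as a compilation of results from \cite{YY}, \cite{FGH}, and \cite{CPZ} and used as background. Your outline therefore cannot be compared against a proof in the paper, but it can be assessed on its own merits against the cited literature, which it largely follows.

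Parts (1) and (3) of your sketch are sound and standard. In Part (1) the Brouwer/perturbation route together with the Collatz--Wielandt comparison correctly identifies $\la_\varepsilon=\rho(\T_\varepsilon)$ and passes to the limit. In Part (3) your support argument is exactly right: for $i\notin J$ the equation $(\T z^{k-1})_i=0$ forces $t_{i i_2\cdots i_k}=0$ whenever all $i_2,\ldots,i_k\in J$, which is the reducibility condition with $I=[n]\setminus J$. Your uniqueness argument in Part (2) is also correct.

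There is, however, a genuine gap in your positivity step for Part (2). You claim that for $i\in[n]\setminus S$ the eigen-equation and nonnegativity of the summands force $t_{i i_2\cdots i_k}=0$ whenever some $i_j\in S$. This is false: if $i_j\in S$ then $(x_0)_{i_j}=0$, so the summand $t_{i i_2\cdots i_k}(x_0)_{i_2}\cdots(x_0)_{i_k}$ vanishes \emph{because of the eigenvector factor}, and nothing can be inferred about $t_{i i_2\cdots i_k}$. Looking instead at $i\in S$ yields only the weaker conclusion that $t_{i i_2\cdots i_k}=0$ when \emph{all} of $i_2,\ldots,i_k$ lie in $[n]\setminus S$; a nonzero entry with one index in $S$ and another in $[n]\setminus S$ still survives and produces an arc of $G(\T)$ out of $S$, so strong connectivity is not contradicted. (Contrast the matrix case $k=2$, where the equation at $i\in S$ really does kill every arc from $S$ to $[n]\setminus S$.) This is exactly why the weak-irreducibility case in \cite{FGH} requires a different mechanism---for instance, showing that the shifted homogeneous map $x\mapsto\big((\mathcal I+\T)x^{k-1}\big)^{[1/(k-1)]}$ is primitive (iterates send every nonzero nonnegative vector to a positive one), or invoking nonlinear Perron--Frobenius theory via Hilbert's projective metric---rather than a bare support argument. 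Your sketch would need one of these ingredients to close Part (2); the ``main obstacle'' is here, not in Part (1).
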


For a connected $k$-uniform hypergraph $G$ (or a hypergraph $\Go$ obtained from $G$ by adding some loops),  by Theorem \ref{PF}, there exists a unique positive eigenvector of $\A(G)$ (respectively, $\Q(G)$), up to a scale, corresponding to its spectral radius, which is called the {\it Perron vector} of $\A(G)$ (respectively, $\Q(G)$).
The product $\A(G)x^k$ or $\Q(G)x^k$ has an interpretation as follows:
$$\A(G)x^k=\sum_{\{v_{i_1},v_{i_2},\ldots,v_{i_k}\} \in E(G)} kx_{v_{i_1}}x_{v_{i_2}} \ldots x_{v_{i_k}},\eqno(2.1)$$
$$\Q(G)x^k=\sum_{v \in V(G)}d_v x_v^k+\sum_{\{v_{i_1},v_{i_2},\ldots,v_{i_k}\} \in E(G)} kx_{v_{i_1}}x_{v_{i_2}} \ldots x_{v_{i_k}}.\eqno(2.2)$$
The eigenvector equation $\mathcal{A}(G)x^{k-1}=\lambda x^{[k-1]}$ could be interpreted as
$$ \lambda x_v^{k-1}= \sum_{\{v,v_2,v_3,\ldots, v_k\} \in E(G)} x_{v_2}x_{v_3} \cdots x_{v_k}, \mbox{~for each~} v \in V(G).\eqno(2.3)$$
The eigenvector equation $\mathcal{Q}(G)x^{k-1}=\lambda x^{[k-1]}$ could be interpreted as
$$ [\lambda-d_v] x_v^{k-1}= \sum_{\{v,v_2,v_3,\ldots, v_k\} \in E(G)} x_{v_2}x_{v_3} \cdots x_{v_k}, \mbox{~for each~} v \in V(G).\eqno(2.4)$$
It is known that
$$\lamin^\A(G) \le \min_{\|x\|_k=1}\A(G)x^k, ~ \lamin^\Q(G) \le \min_{\|x\|_k=1}\Q(G)x^k,$$
with equality if and only if $x$ is an eigenvector corresponding to $\lamin^\A(G)$ (respectively, $\lamin^\Q(G)$).

Denote  by $\Delta(G)$ (respectively, $\delta(G)$) the maximum degree (respectively, the minimum degree) of a hypergraph $G$.

\begin{lemma} \label{interlace}
Let $\T$ be a  tensor of order $k$ and dimension $n$, and let $\T[S]$ be a principle subtensor of $\T$ with $S \subsetneq [n]$.
Then
$$ \lamin(\T)\le \lamin(\T[S]),~ \lamax(\T[S]) \le \lamax(\T);$$
if, in addition, $\T$ is nonnegative and weakly irreducible, then $\rho(\T[S]) < \rho(\T)$.
\end{lemma}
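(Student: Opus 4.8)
I will prove the two inequalities by restricting extremal test vectors, and the strict inequality via the Perron–Frobenius theorem. For the first pair, I would take an $H$-eigenvector $y \in \mathbb{R}^{|S|}$ of $\T[S]$ realizing $\lamin(\T[S])$ (such a vector exists by definition of the $H$-spectrum, provided $\T[S]$ has an $H$-eigenvalue; but note $\lamin$ is defined over the $H$-spectrum, so this is exactly what $\lamin(\T[S])$ refers to). Then I would extend $y$ to a vector $x \in \mathbb{R}^n$ by setting $x_i = y_i$ for $i \in S$ and $x_i = 0$ for $i \notin S$. A direct computation using the definition of $\T x^k$ shows $\T x^k = \T[S] y^k$ (every monomial $t_{i_1 \ldots i_k} x_{i_1} \cdots x_{i_k}$ vanishes unless all indices lie in $S$), and likewise $\|x\|_k = \|y\|_k$. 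However, $\lamin(\T)$ is not in general the minimum of the Rayleigh quotient for non-symmetric or indefinite tensors, so I instead argue directly from the eigenvalue equation combined with a known variational fact, or — cleaner — I invoke the characterization that for the relevant tensors ($\A$, $\Q$, or more generally symmetric tensors) $\lamin(\T) = \min_{\|x\|_k=1} \T x^k$. Since the lemma is stated for general $\T$, the honest approach is: this lemma is really only used for symmetric (indeed nonnegative or signless-Laplacian-type) tensors in the paper, so I would either (a) add the hypothesis that $\T$ is symmetric, or (b) restrict to the cases actually needed. Assuming symmetry, $\lamin(\T) = \min_{\|x\|_k=1}\T x^k \le \T[S]y^k/\|y\|_k^k = \lamin(\T[S])$, and the $\lamax$ inequality follows by the same argument with max in place of min (or by applying the min-inequality to $-\T$).

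**The strict inequality.** Now suppose $\T$ is nonnegative and weakly irreducible. By Theorem~\ref{PF}(1), $\rho(\T[S])$ is an $H^+$-eigenvalue of $\T[S]$, with a nonnegative eigenvector $y \ge 0$, $y \ne 0$. Extend $y$ by zeros to $x \in \mathbb{R}_+^n$ as above. I want to show $\rho(\T) > \rho(\T[S])$. Since $\rho(\T) \ge \lamax(\T) \ge \lamax(\T[S]) \ge \rho(\T[S])$ from the first part (using $\rho = \lamax$ for nonnegative tensors), it remains to rule out equality. Suppose for contradiction $\rho(\T) = \rho(\T[S]) =: \rho$. Then $x$ achieves the Rayleigh maximum $\T x^k = \rho \|x\|_k^k$, so by the equality condition quoted in the excerpt (stated there for $\lamin$, with the symmetric analogue for $\lamax$) $x$ is an eigenvector of $\T$ for $\rho = \rho(\T) = \lamax(\T)$. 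But $x$ has a zero coordinate (some $i \notin S$, and $S \subsetneq [n]$ guarantees $S \ne [n]$), contradicting Theorem~\ref{PF}(2): the Perron eigenvector of a weakly irreducible nonnegative tensor is strictly positive, and it is the unique eigenvector for $\rho(\T)$ up to scaling among nonnegative vectors. Hence $\rho(\T[S]) < \rho(\T)$.

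**Main obstacle.** The delicate point is the first part for a completely general tensor $\T$: the identity $\lamin(\T) = \min_{\|x\|_k = 1} \T x^k$ genuinely requires symmetry (and even then one should be slightly careful about whether the $H$-spectrum is nonempty — for symmetric $\T$ the minimum of $\T x^k$ on the unit sphere is attained and is an $H$-eigenvalue by a Lagrange multiplier argument, so $\lamin(\T)$ is well-defined). For non-symmetric $\T$ the claimed inequalities can fail. I expect the cleanest fix, matching how the lemma is actually invoked in Section~3, is to state and use it for symmetric tensors — in particular $\A$, $\Q$, and their principal subtensors, all of which are symmetric and (for $\A$, $\Q$ of connected hypergraphs) nonnegative and weakly irreducible. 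Granting symmetry, everything above goes through, and the only real work is the bookkeeping identity $\T x^k = \T[S] y^k$ under zero-extension, which is immediate from the multilinear definition.
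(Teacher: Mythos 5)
Your first part is essentially the paper's own argument: extend the extremal eigenvector of $\T[S]$ by zeros and compare via the variational characterization $\lamin(\T)=\min_{\|z\|_k=1}\T z^k$. You are right that this characterization needs $\T$ symmetric (and $k$ even); the paper invokes it silently, so your explicit caveat is a point in your favor rather than a divergence. Where you genuinely depart from the paper is the strict inequality $\rho(\T[S])<\rho(\T)$. The paper works in the opposite direction: it takes the positive Perron vector $z$ of $\T$, restricts it to $S$, observes that weak irreducibility forces $\T[S]z[S]^{k-1}\lneq\rho(\T)z[S]^{[k-1]}$ (some positive entry of $\T$ with first index in $S$ has an index outside $S$), and then applies a comparison lemma (\cite[Corollary 3.4]{KF}) to conclude $\rho(\T[S])<\rho(\T)$. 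This route needs no symmetry and no equality analysis of the Rayleigh quotient. Your route --- contradiction via the equality case of the max-Rayleigh characterization plus Perron uniqueness --- is workable but rests on two points you should make explicit: (i) the step ``maximizer of $\T x^k$ on the unit sphere is an eigenvector for $\lamax(\T)$'' again requires symmetry and even order; and (ii) you cite Theorem \ref{PF}(2) for ``uniqueness up to scaling among nonnegative vectors,'' but as stated that item only gives uniqueness among \emph{positive} vectors; uniqueness among nonnegative vectors is item (3), which assumes the stronger hypothesis of irreducibility. The fact you actually need --- that a nonnegative eigenvector of a weakly irreducible nonnegative tensor associated with $\rho(\T)$ cannot have a zero entry --- is true and standard in the literature, but it is not literally contained in the theorem as quoted, so you should either cite it separately or prove it (e.g., via a Collatz--Wielandt or comparison argument). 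With that repaired, your proof is correct; the paper's comparison-lemma route avoids both caveats at the cost of importing an external lemma.
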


\begin{proof}
Let $x$ (respectively, $y$) be an eigenvector of $\T[S]$ corresponding to $\lamin(\T[S])$ (respectively, $\lamax(\T[S])$) with $\|x\|_k=1$ (respectively, $\|y\|_k=1$).
Define a vector $\bar{x}$ (respectively, $\bar{y}$) of $\mathbb{R}^n$ such that $\bar{x}_i=x_i$ (respectively, $\bar{y}_i=y_i$) if $i \in S$ and $\bar{x}_i=0$ (respectively, $\bar{y}_i=0$) otherwise.
Then
$$\lamin(\T) =\min_{\|z\|_k=1}\T z^k \le \T\bar{x}^k=\T[S]x^k=\lamin(\T[S]),$$
and
$$\lamax(\T) =\max_{\|z\|_k=1}\T z^k \ge \T\bar{y}^k=\T[S]y^k=\lamax(\T[S]).$$
%By Theorem \ref{PF}, $y \gneq 0$.

If $\T$ is nonnegative and weakly irreducible, then by Theorem \ref{PF} there exists a positive eigenvector $z$ of $\T$ corresponding to $\rho(\T)$, i.e. $\T z^{k-1}=\rho(\T)z^{[k-1]}$.
Also by the weak irreducibility of $\T$, there exists at least one $i \in S$ such that $\T_{ii_2i_3\ldots i_k}>0$ for some $i_t \notin S$, where $t \in \{2,3,\ldots,k\}$.
So, $\T[S] z[S]^{k-1}\lneq \rho(\T)z[S]^{[k-1]}$.
Now by \cite[Corollary 3.4]{KF}, we have $\rho(\T[S]) < \rho(\T)$.
\end{proof}

By Lemma \ref{interlace}, for a hypergraph $G$, by taking a vertex $u$ with $d_u=\delta(G)$, we get $\Q(G)[u]=\delta(G)$, and hence
 $\lamin^\Q(G) \le \delta(G)$.
 Similarly, if we take a vertex $w$ with $d_w=\Delta(G)$, then $\rho^\Q(G) \ge \Delta(G)$.
 Furthermore, considering a component $H$ of $G$ which contains the vertex $w$, then
 $\rho^\Q(G) \ge \rho^\Q(H) > \Delta(H)=\Delta(G)$ as $\Q(H)$ is weakly irreducible.
 The latter result has been shown in \cite{HQX} with a more accurate bound.
Here we use a unified method to deal with the bounds of $\lamin^\Q(G)$ and $\rho^\Q(G)$.

\begin{corollary} \label{boundQ}
Let $G$ be a $k$-uniform hypergraph or a hypergraph obtained from a $k$-uniform hypergraph by adding some loops.
Then $\rho^\Q(G)>\Delta(G)$ and $\rho^\L(G)>\Delta(G)$. If furthermore $k$ is even, then
$\lamin^\Q(G) < \delta(G)$ and $\lamin^\L(G) < \delta(G)$
\end{corollary}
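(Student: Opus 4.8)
The plan is to obtain all four inequalities uniformly by feeding a test vector supported on one well-chosen edge into the variational bounds recorded before the corollary,
$$\lamin^\Q(G)\le \frac{\Q(G)x^k}{\|x\|_k^k}\le \lamax^\Q(G),\qquad \lamin^\L(G)\le \frac{\L(G)x^k}{\|x\|_k^k}\le \lamax^\L(G)\qquad (x\ne 0),$$
together with $\rho^\Q(G)=\lamax^\Q(G)$ (Theorem \ref{PF}, since $\Q(G)$ is nonnegative) and $\rho^\L(G)\ge\lamax^\L(G)$ ($\lamax^\L(G)$ being a real eigenvalue of $\L(G)$). The first step is the elementary remark that if $x$ vanishes outside the vertex set of a single edge $e=\{v_{i_1},\dots,v_{i_k}\}\in E(G)$, then by $(2.1)$ and $(2.2)$ every other edge is a $k$-set not contained in $e$ and hence contributes $0$ to $\A(G)x^k$, while loops never enter the adjacency part; consequently
$$\Q(G)x^k=\sum_{j=1}^k d_{v_{i_j}} x_{v_{i_j}}^k+k\prod_{j=1}^k x_{v_{i_j}},\qquad \L(G)x^k=\sum_{j=1}^k d_{v_{i_j}} x_{v_{i_j}}^k-k\prod_{j=1}^k x_{v_{i_j}},$$
which are precisely $\Q(G)[e]x^k$ and $\L(G)[e]x^k$ (so Lemma \ref{interlace} can be invoked as an alternative). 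Throughout, in the non-degenerate situation — e.g. $G$ connected with at least two vertices, so that every vertex lies in an edge — I pick a vertex of minimum (resp. maximum) degree and an edge $e$ through it.

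For $\rho^\Q(G)>\Delta(G)$: choose $w$ with $d_w=\Delta(G)$ in an edge $e$, and set $x_w=t>0$ and $x_v=1$ on the other $k-1$ vertices of $e$. Then $\Q(G)x^k=\Delta(G)t^k+C+kt$ with $C=\sum_{v\in e,\ v\ne w}d_v\ge 0$ a constant, and $\|x\|_k^k=t^k+k-1$, so
$$\Q(G)x^k-\Delta(G)\|x\|_k^k=C+kt-\Delta(G)(k-1)\longrightarrow+\infty\quad(t\to\infty),$$
whence $\rho^\Q(G)=\lamax^\Q(G)>\Delta(G)$; this also reproves, by a single argument, the inequality already noted in the paragraph preceding the corollary. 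For $\rho^\L(G)>\Delta(G)$ the cancellation of the $t^k$-terms is identical, but now the term $-k\prod_{j}x_{v_{i_j}}$ must be made positive: flip the sign of one of the non-central vertices of $e$, so that $\prod_{j}x_{v_{i_j}}<0$; the resulting quantity is again of the form $C'+kt-\Delta(G)(k-1)\to+\infty$, hence $\rho^\L(G)\ge\lamax^\L(G)>\Delta(G)$.

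For the least $H$-eigenvalues when $k$ is even: choose $u$ with $d_u=\delta(G)$ in an edge $e$. For $\Q$, set $x_u=-t$ and $x_v=1$ on the remaining $k-1$ vertices of $e$; since $k$ is even, $x_u^k=t^k$ while $\prod_{j}x_{v_{i_j}}=-t<0$, so
$$\Q(G)x^k-\delta(G)\|x\|_k^k=C-kt-\delta(G)(k-1)\longrightarrow-\infty\quad(t\to\infty),$$
giving $\lamin^\Q(G)<\delta(G)$. For $\L$, set $x_u=t$ and $x_v=1$ on the other $k-1$ vertices of $e$, so that $-k\prod_{j}x_{v_{i_j}}=-kt<0$; the same computation yields $\L(G)x^k-\delta(G)\|x\|_k^k=C-kt-\delta(G)(k-1)\to-\infty$, hence $\lamin^\L(G)<\delta(G)$.

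The computation is routine, and I do not expect any substantive obstacle beyond two bookkeeping points. The first is the single-edge reduction: one must check that, with $x$ supported on $e$, neither any other edge nor any loop contributes to $\A(G)x^k$. The second is keeping straight the signs of $\prod_{j}x_{v_{i_j}}$ against the powers $x_{v_{i_j}}^k$ — and this is exactly where the hypothesis that $k$ is even enters the two least-eigenvalue bounds: it lets the product term be made negative while every $x_{v_{i_j}}^k$ stays nonnegative, so the coefficient of $t^k$ remains precisely $\delta(G)$ and cancels against $\delta(G)\|x\|_k^k$, leaving the decisive linear term $-kt$.
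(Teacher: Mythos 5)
Your argument is a genuinely different route from the paper's. The paper restricts to the principal subtensor $\Q(G)[e]$ on a single edge, writes out its eigenvalue equations, multiplies them to obtain the relation $f(\la)=(\la-d_1)\cdots(\la-d_k)-1=0$ for eigenvalues with nowhere-zero eigenvectors, locates a root above $\Delta(G)$ (below $\delta(G)$ when $k$ is even) from the sign change $f(\Delta(G))<0$, $f(\la)\to+\infty$, and then transfers to $G$ via Lemma \ref{interlace}. You instead feed explicit test vectors supported on one edge into Rayleigh-type bounds. Your single-edge reduction and the computations are correct, and three of the four inequalities come out cleanly: $\rho^\Q(G)>\Delta(G)$ for all $k$ (since $\Q(G)$ is a nonnegative symmetric tensor, $\rho(\Q(G))\ge \Q(G)x^k/\|x\|_k^k$ does hold for every $x$, in particular your nonnegative test vector), and $\lamin^\Q(G)<\delta(G)$, $\lamin^\L(G)<\delta(G)$ for even $k$, where Qi's variational characterization of the extreme $H$-eigenvalues of an \emph{even-order} symmetric tensor applies. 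For these parts your argument is arguably more transparent than the paper's.

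The gap is in $\rho^\L(G)>\Delta(G)$ when $k$ is odd. There your chain is $\rho^\L\ge\lamax^\L\ge \L(G)x^k/\|x\|_k^k$, and the second inequality is unjustified: the identity $\lamax(\T)=\max_{\|x\|_k=1}\T x^k$ is a theorem about even-order symmetric tensors (for odd $k$ the set $\{x:\sum_i x_i^k=1\}$ is noncompact, and the Lagrange condition on $\{\sum_i|x_i|^k=1\}$ is $\T x^{k-1}=\la\,\sgn(x_i)|x_i|^{k-1}$, which is not the $H$-eigenvalue equation). The failure is not cosmetic. Take $G$ a single $3$-edge with all degrees $1$: the $H$-eigenvalues of $\L(G)$ are exactly $0$ and $1$, so $\lamax^\L(G)=1=\Delta(G)$, while your test vector $x=(t,-1,1)$ gives $\L(G)x^3/\|x\|_3^3=(t^3+3t)/(t^3+2)>1$. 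So the inequality $\lamax^\L(G)\ge \L(G)x^k/\|x\|_k^k$ is simply false here, and indeed $\lamax^\L(G)$ is \emph{not} greater than $\Delta(G)$; the claimed bound $\rho^\L(G)>\Delta(G)$ is carried by the complex eigenvalues $1+\omega$ with $\omega^3=-1$, $\omega\ne-1$, of modulus $\sqrt{3}$, which no real Rayleigh-quotient argument can detect. For odd $k$ you must argue as the paper implicitly does, through the equation $\prod_i(\la-d_i)=(-1)^k$ for the single-edge subtensor and the moduli of its complex roots (and even then one needs a separate justification that $\rho$ of a principal subtensor of $\L$ bounds $\rho^\L(G)$ from below, since Lemma \ref{interlace} compares only the extreme $H$-eigenvalues for non-nonnegative tensors). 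For even $k$ your sign-flip argument for $\rho^\L$ is fine.
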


\begin{proof}
Here we only prove the result for eigenvalues of signless Laplacian tensor. The corresponding result for Laplacian tensor can be discussed similarly.
Without loss of generality, let $e=\{1,2,\ldots,k\}$ be an edge of $G$.
Considering the $H$-eigenvalues of the principal subtensor $\Q(G)[e]$, by the eigenvector equation $\Q(G)[e]x^{k-1}=\la x^{[k-1]}$, where $x \in \mathbb{R}^{k}$, we have
$$ [\la-d_i]x_i^{k-1}=\Pi_{j \in [k] \backslash \{i\}}x_j, \mbox{~for~} i =1,2,\ldots,k.$$
If there exists some $i$ such that $x_i=0$, letting $j$ be such that $x_j \ne 0$, by the $j$th equation we have $\la=d_j$.
Otherwise, all $x_i$'s are nonzero; and multiplying both sides of the above equations over all $i$'s, we get that
$$ f(\la):=(\la-d_1)(\la-d_2)\cdots (\la-d_k)-1=0.$$
 If $e$ contains the vertex with maximum degree, then $f(\Delta(G))<0$.
 Noting that  $f(\la) \to +\infty$ when $\la \to +\infty$, so we have $\rho(\Q(G)[e])>\Delta(G)$.
Similarly, if $k$ is even and $e$ contains the vertex with minimum degree, then $f(\la) \to +\infty$ when $\la \to -\infty$, and $f(\delta(G))<0$,
which implies that $\lamin(\Q(G)[e])<\delta(G)$.
The result now follows by Lemma \ref{interlace}.
\end{proof}

Finally we introduce the generalized power hypergraphs defined in \cite{KF}.

\begin{definition} {\em \cite{KF}}
Let $G=(V,E)$ be a simple graph. For any $k \ge 3$ and $1 \le s \le k/2$, the generalized power of $G$, denoted by $G^{k,s}$, is defined as
the $k$-uniform hypergraph with the vertex set $\{\v: v \in V\} \cup \{\mathbf{e}: e \in E\}$, and the edge set
$\{\u \cup \v \cup \mathbf{e}: e=\{u,v\} \in E\}$, where $\v$ is an $s$-set containing $v$ and $\mathbf{e}$ is a $(k-2s)$-set corresponding to $e$.
\end{definition}

\begin{center}
\includegraphics[scale=.6]{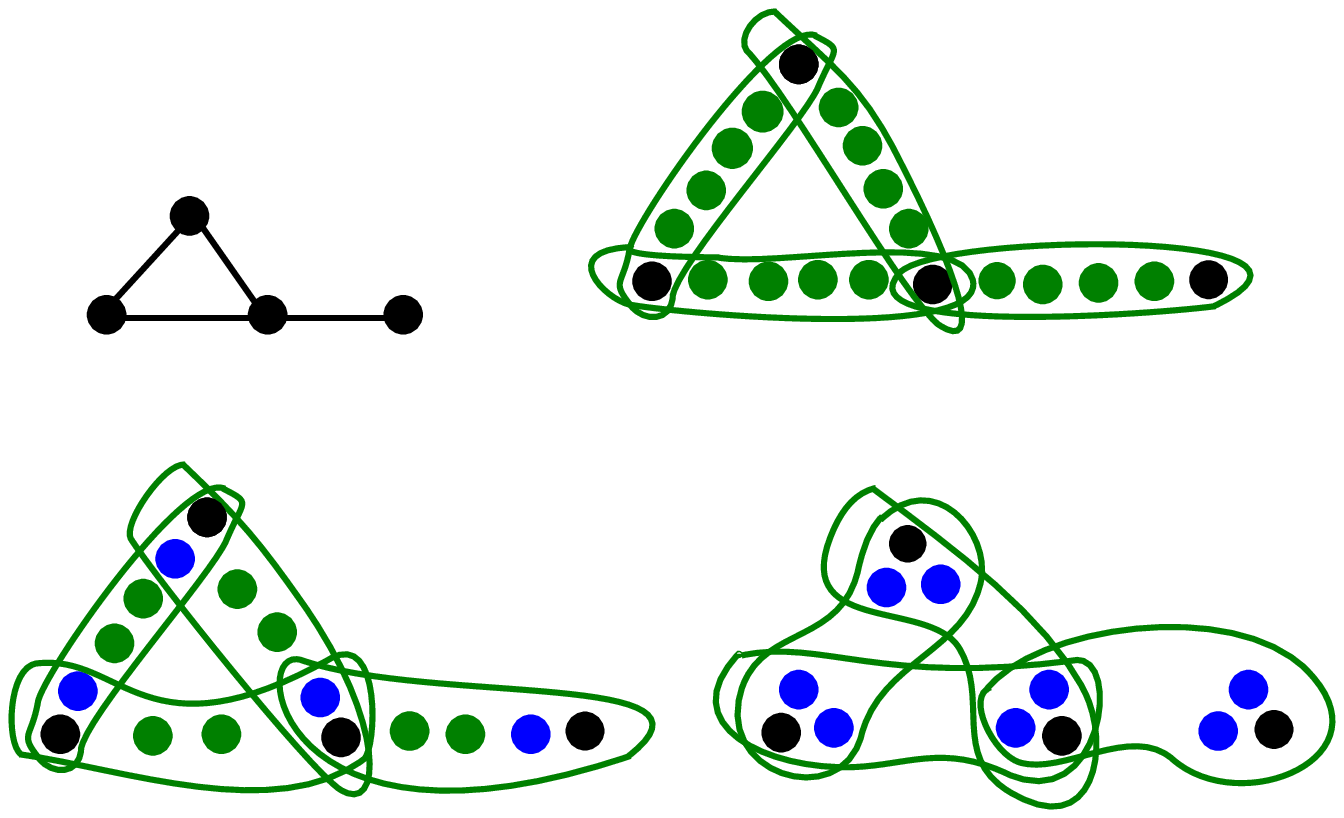}

\small{Fig. 2.1 (c.f. \cite{KF}) Constructing power hypergraphs $G^6$ (right upper), $G^{6,2}$ (left below) and $G^{6,3}$ (right below) from a simple graph $G$ (left upper), where a closed curve represents an edge}
\end{center}

Intuitively, $G^{k,s}$ is obtained from $G$ by replacing each vertex $v$ by an $s$-subset $\v$ and replacing each edge $\{u,v\}$ by a $k$-set obtained from $\u \cup \v$ by adding  $(k-2s)$ new vertices.
  If $s=1$, then $G^{k,s}$  is exactly the $k$th power hypergraph of $G$.
 When $G$ is a path or a cycle, then $G^{k,s}$ is an $s$-path or $s$-cycle for $s \le k/2$.
Note that if $s<k/2$, then $G^{k,s}$ is a cored hypergraphs and hence is odd-bipartite.
If $s=k/2$ ($k$ being even), then $G^{k,s}$  is obtained from $G$ by only blowing up its vertices.
In this case, $k$ is always assumed to be even; $\{u,v\}$ is an edge of $G$ if and only if $\u\cup\v$ is an edge of $G^{k,{k \over 2} }$, where we use the bold $\v$ to denote the blowing-up of the vertex $v$ in $G$.
For simplicity, we write $\u\v$ rather than $\u\cup\v$, and call $\u$ a {\it half edge} of $G^{k,{k \over 2} }$.
Denote by $d_\u$ the common degree of the vertices in $\u$.

If $G=\Go$, a simple graph with loops (i.e. edges containing only one vertex), then $(\Go)^{k,s}$ will have loops containing $k-s$ vertices.
In particular, $(\Go)^{k,{k \over 2}}$ will have loops containing ${k \over 2}$ vertices.
That is, if $\{u\}$ is a loop of $\Go$, then the half edge $\u$ is a loop of $(\Go)^{k,{k \over 2}}$.

\begin{lemma}\label{NOB} {\em \cite{KF}} Let $G$ be a simple graph without loops and let $k$ be an even positive integer.
The hypergraph $G^{k,{k \over 2}}$ is non-odd-bipartite if and only if $G$ is non-bipartite.
\end{lemma}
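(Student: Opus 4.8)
The plan is to prove both directions by exhibiting explicit bipartitions, translating between a bipartition of the simple graph $G$ and an odd-bipartition of the $k$-uniform hypergraph $G^{k,k/2}$. Recall that $G^{k,k/2}$ has vertex set $\bigcup_{v\in V}\v$, where each $\v$ is a $\frac{k}{2}$-set, and each edge is of the form $\u\v$ for $\{u,v\}\in E$; this edge has exactly $\frac{k}{2}$ vertices from $\u$ and $\frac{k}{2}$ from $\v$.

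For the ``if'' direction (equivalently, the contrapositive of ``only if''), suppose $G$ is bipartite with bipartition $V=V_1\cup V_2$. I would set $W_i=\bigcup_{v\in V_i}\v$ for $i=1,2$, obtaining a partition $V(G^{k,k/2})=W_1\cup W_2$. For any edge $\u\v$ of $G^{k,k/2}$ coming from $\{u,v\}\in E$, one endpoint, say $u$, lies in $V_1$ and the other in $V_2$, so the edge contributes $\frac{k}{2}$ vertices to $W_1$ and $\frac{k}{2}$ to $W_2$. Since $k$ is even, $\frac{k}{2}$ need not be odd, so this particular partition is \emph{not} automatically an odd-bipartition — this is exactly the subtlety the definition of $s=k/2$ is designed around. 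The correct move is to further split each $\frac{k}{2}$-set $\v$ into a single distinguished vertex plus the remaining $\frac{k}{2}-1$ vertices, and to place the distinguished vertex and the bulk into the two parts in a way that depends on whether $v\in V_1$ or $v\in V_2$; one checks that each edge then meets each part in an odd number of vertices. Getting the parity bookkeeping right here is the first place to be careful.

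For the ``only if'' direction, suppose $G^{k,k/2}$ is odd-bipartite with odd-bipartition $V(G^{k,k/2})=W_1\cup W_2$. The key structural observation is that all vertices inside a single blown-up set $\v$ are ``symmetric'' with respect to edges: any two vertices of $\v$ lie in exactly the same edges of $G^{k,k/2}$. I would first argue that the odd-bipartition must, up to the parity condition, respect this symmetry — more precisely, I would count, for a fixed edge $\u\v$, the contributions $|\u\cap W_1|$ and $|\v\cap W_1|$ and use that their sum $|\,(\u\v)\cap W_1\,|$ is odd together with $|\u|=|\v|=\frac{k}{2}$. From the oddness condition applied across all edges incident to a given vertex $u$ of $G$, I would derive a consistent assignment: define $u\in V_1$ or $u\in V_2$ according to the parity of $|\v\cap W_1|$ (this is forced to be well-defined). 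Then the odd-bipartition condition on each hypergraph edge $\u\v$ forces $u$ and $v$ to receive opposite labels, i.e. $V=V_1\cup V_2$ is a bipartition of $G$, so $G$ is bipartite; contrapositively, $G$ non-bipartite implies $G^{k,k/2}$ non-odd-bipartite.

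The main obstacle I anticipate is the bookkeeping in the forward direction: extracting a genuine vertex-bipartition of $G$ from an odd-bipartition of the hypergraph requires showing that the quantity $|\v\cap W_1| \bmod 2$ is the ``right'' invariant and that it is forced to alternate along every edge. One has to rule out the a priori possibility that different edges incident to the same $u$ impose conflicting parities on $|\u\cap W_1|$; this follows because $|\u\cap W_1| + |\v\cap W_1|$ is odd for \emph{every} neighbor $v$, so $|\u\cap W_1|\bmod 2$ is the same regardless of which incident edge we use. Once this consistency is in hand, both directions close quickly, and the role of the hypothesis ``$G$ has no loops'' is simply to ensure every edge of $G^{k,k/2}$ genuinely straddles two distinct blown-up sets, so that the alternation argument is not vacuous or self-contradictory on a loop.
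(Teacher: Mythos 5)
The paper does not actually prove this lemma --- it is quoted from \cite{KF} --- so there is no in-paper proof to compare against; I can only judge your proposal on its own terms. Your central invariant is the right one, and it makes the backward direction essentially complete: since $k$ is even, an odd-bipartition $W_1\cup W_2$ of $G^{k,\frac{k}{2}}$ is exactly a set $W_1$ meeting every edge in an odd number of vertices, and since $|\u\v\cap W_1|=|\u\cap W_1|+|\v\cap W_1|$, the parity $a_v:=|\v\cap W_1|\bmod 2$ must differ across every edge of $G$, i.e.\ it is a proper $2$-colouring, so $G$ is bipartite. (The ``consistency'' issue you raise is vacuous: $|\u\cap W_1|$ is a number attached to $\u$ and $W_1$ alone and does not depend on which incident edge you look at, so there is nothing to rule out.)

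The one place you stop at ``one checks'' is the forward direction, and the assignment you sketch --- a distinguished vertex versus the bulk, with the roles swapped between the two sides of $G$ --- fails in its most natural reading: if each $v\in V_1$ contributes its distinguished vertex to $W_1$ while each $v\in V_2$ contributes its remaining $\frac{k}{2}-1$ vertices to $W_1$, then every edge meets $W_1$ in $1+\bigl(\frac{k}{2}-1\bigr)=\frac{k}{2}$ vertices, which is even whenever $4\mid k$. The repair is immediate from your own invariant: all you need is $|\v\cap W_1|$ odd for $v\in V_1$ and even for $v\in V_2$, so take $W_1$ to consist of exactly one vertex from each class $\v$ with $v\in V_1$ and no vertices from the classes over $V_2$; then every edge meets $W_1$ in exactly $1$ vertex and $W_2$ in $k-1$ vertices, both odd (using again that $k$ is even). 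With that choice made explicit both directions close, and, as you note, the absence of loops in $G$ is what guarantees every hypergraph edge is the union of two distinct classes so that the parity alternation is meaningful.
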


\section{Relationship between the eigenvalues of $G$ and $G^{k,\frac{k}{2}}$}
Let $G$ be a simple graph on $n$ vertices possibly with loops.
We first list some properties of the eigenvalues and eigenvectors of $\A(G^{k,\frac{k}{2}})$ and $\Q(G^{k,\frac{k}{2}})$.
First $\A(G^{k,\frac{k}{2}})$ has zero eigenvalues with geometric multiplicity at least $\frac{kn}{2}$ (the number of vertices of $G^{k,\frac{k}{2}}$).
Let $v$ be an arbitrary fixed vertex of $G^{k,\frac{k}{2}}$.
Define a vector $\x$ on $G^{k,\frac{k}{2}}$ such that $\x_v=1$ and $\x_u=0$ for any other vertices $u \ne v$.
It is easy to verify by (2.3) that $0$ is an eigenvalue of $\A(G^{k,\frac{k}{2}})$ with $\x$ as an eigenvector.
Similarly, also using the vector $\x$ defined as the above, by (2.4) we get that $d_v$ is an eigenvalue of $\Q(G^{k,\frac{k}{2}})$ with geometric multiplicity at least $\frac{k}{2}$ (the number of vertices in the half edge $\v$).

From the above facts, we find that the vertices in the same half edge of $G^{k,\frac{k}{2}}$ may have different values given by eigenvectors of $\A(G^{k,\frac{k}{2}})$ or $\Q(G^{k,\frac{k}{2}})$.
However, if $\la \ne 0$ (respectively, $\la \ne d_v$ for some vertex $v$) as an eigenvalue of  $\A(G^{k,\frac{k}{2}})$ (respectively, an eigenvalue of  $\Q(G^{k,\frac{k}{2}})$),
we will have the following property on the eigenvectors associated with $\la$.
For a nonempty subset $S \subseteq V(G^{k,{k \over 2}})$, denote $x^S:=\Pi_{v \in S} x_v$, where $x$ is a vector defined on the vertices of $G^{k,{k \over 2} }$.

\begin{lemma}
\label{abs Vec equal}Let $G$ be a simple graph possibly with loops.
Let $u$ and $\bar{u}$ be two vertices in the same half edge $\mathbf{u}$ of $G^{k,\frac{k}{2}}$.
If $\x$ is an eigenvector of $\A(G^{k,\frac{k}{2}})$ corresponding an eigenvalue $\lambda \ne 0$,
or an eigenvector of $\Q(G^{k,\frac{k}{2}})$ corresponding an eigenvalue $\lambda \ne d_\u$, then $\x_u^k=\x_v^k$, and hence $|\x_{u}|= |\x_{\bar{u}}| $.
\end{lemma}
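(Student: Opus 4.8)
The plan is to use the eigenvector equations (2.3) and (2.4) applied at the vertices $u$ and $\bar u$ lying in the common half edge $\mathbf u$, and to exploit the fact that $u$ and $\bar u$ participate in exactly the same edges of $G^{k,k/2}$, namely the edges $\mathbf u\mathbf w$ for each neighbor $w$ of the corresponding vertex in $G$. First I would write down, for the adjacency case, equation (2.3) at $v=u$ and at $v=\bar u$. Since every edge containing $u$ also contains $\bar u$ and conversely, the right-hand sides of the two equations agree \emph{except} that the product over the remaining $k-1$ vertices includes $\x_{\bar u}$ in the equation at $u$ and includes $\x_u$ in the equation at $\bar u$. Concretely, writing $P$ for the sum over neighbors $w$ of the relevant vertex of the products $\x^{\mathbf u\setminus\{u,\bar u\}}\cdot \x^{\mathbf w}$ (a quantity not involving $\x_u$ or $\x_{\bar u}$), we get $\lambda \x_u^{k-1}=\x_{\bar u}\,P$ and $\lambda \x_{\bar u}^{k-1}=\x_u\,P$. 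Multiplying the first by $\x_u$ and the second by $\x_{\bar u}$ gives $\lambda\x_u^k=\x_u\x_{\bar u}P=\lambda\x_{\bar u}^k$, and since $\lambda\neq 0$ we conclude $\x_u^k=\x_{\bar u}^k$. As $k$ is even, taking $k$th roots yields $|\x_u|=|\x_{\bar u}|$.

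For the signless Laplacian case the argument is identical in structure, using (2.4) instead: the degree of $u$ equals the degree of $\bar u$ (both equal $d_{\mathbf u}$, the common degree of vertices in the half edge), so we get $(\lambda-d_{\mathbf u})\x_u^{k-1}=\x_{\bar u}P$ and $(\lambda-d_{\mathbf u})\x_{\bar u}^{k-1}=\x_u P$ with the \emph{same} $P$ as above (the loop, if present, contributes only to the degree term, not to $P$). Multiplying across as before and using $\lambda\neq d_{\mathbf u}$ gives $\x_u^k=\x_{\bar u}^k$, hence $|\x_u|=|\x_{\bar u}|$.

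The only point requiring a little care is the bookkeeping of which vertices appear in the product on the right-hand side of (2.3)/(2.4): one must check that the product over the $k-1$ vertices other than $u$ in an edge $\mathbf u\mathbf w$ is exactly $\x_{\bar u}$ times a factor symmetric in $u$ and $\bar u$ (namely $\x^{(\mathbf u\setminus\{u,\bar u\})}\x^{\mathbf w}$), and symmetrically for $\bar u$. This is immediate from the definition of $G^{k,k/2}$, since the edge set is $\{\mathbf u\mathbf w:\{u,w\}\in E\}$ and $\mathbf u$ has $k/2\ge 1$ vertices so $\mathbf u\setminus\{u,\bar u\}$ is well-defined (of size $k/2-2\ge -1$; if $k=2$ there is no such pair $u\neq\bar u$, and if $k\ge 4$ it is a genuine, possibly empty, set). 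I expect no real obstacle here; the statement follows essentially immediately once the two eigenvector equations are written side by side and cross-multiplied.
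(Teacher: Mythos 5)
Your proposal is correct and follows essentially the same route as the paper: write the eigenvector equation (2.3) (respectively (2.4)) at $u$ and at $\bar u$, multiply by $\x_u$ and $\x_{\bar u}$ respectively so that both right-hand sides become the common quantity $\sum_{\v}\x^{\u\v}$, and cancel $\lambda$ (respectively $\lambda-d_\u$). Your explicit factorization through $P$ and the remark about loops only affecting the degree term are just slightly more detailed bookkeeping of the same argument.
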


\begin{proof}
If $\x$ is an eigenvector of $\A(G^{k,\frac{k}{2}})$, by the eigenvector equation (2.3),
$$ \lambda \x_u^{k-1}=\sum_{\v: \u\v \in E(G^{k,\frac{k}{2}})}\x^{\u\v \backslash \{u\}},
\lambda \x_{\bar{u}}^{k-1}=\sum_{\v: \u\v \in E(G^{k,\frac{k}{2}})}\x^{\u\v \backslash \{\bar{u}\}}.$$
So we have $\lambda \x_u^{k}=\lambda \x_{\bar{u}}^{k}$, which implies the result.
Similarly, if $\x$ is an eigenvector of $\Q(G^{k,\frac{k}{2}})$, by (2.4) we have $(\lambda-d_\u) \x_u^{k}=(\lambda-d_\u) \x_{\bar{u}}^{k}$.
As $\lambda \ne d_\u$,  the result also follows.
\end{proof}

If $G$ is a simple graph containing at least one edge, by Theorem \ref{PF} and Corollary \ref{boundQ},
$$\rho^\A(G^{k,\frac{k}{2}})>0, \rho^\Q(G^{k,\frac{k}{2}})>\Delta(G), \lamin^\Q(G^{k,\frac{k}{2}}) < \delta(G).$$
 Note that the sum of all eigenvalues of $\A(G^{k,\frac{k}{2}})$ is $(k-1)^{n-1}\mbox{tr}(\A(G^{k,\frac{k}{2}}))=0$ (\cite{Qi2}), which implies $\lamin^\A(G^{k,\frac{k}{2}})<0$, where $\mbox{tr}(\A)$ is the trace of $\A$.
Now, then by Lemma \ref{abs Vec equal}, all vertices in a half edge $\u$ of $G^{k,\frac{k}{2}}$ have the same modular given by the eigenvector $\x$ of $\rho^\A(G^{k,\frac{k}{2}}), \rho^\Q(G^{k,\frac{k}{2}}), \lamin^\A(G)$ or $\lamin^\Q(G)$; the common modular is denoted by $|\x_{\u}| $.

\begin{corollary}\label{nonzero-H}
Let $\la$ be an $H$-eigenvalue of $\A(G^{k,\frac{k}{2}})$ or $\Q(G^{k,\frac{k}{2}})$ corresponding to an eigenvector $\x$, where $G$ is a simple graph possibly with loops.
Suppose that $\x$ contains no zero entries.
If $\la \ne 0$ as an eigenvalue of $\A(G^{k,\frac{k}{2}})$ or $\la \notin \{d_\u: u \in V(G)\}$ as an eigenvalue of $\Q(G^{k,\frac{k}{2}})$,
then $\la$ is an eigenvalue of $\A(G)$ or $\Q(G)$ with eigenvector $x$ such that $x_v=\x^\v$ for each $v \in V(G)$.
\end{corollary}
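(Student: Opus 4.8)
The plan is to descend the $H$-eigenvector equation on $G^{k,\frac k2}$ to an ordinary matrix eigenvector equation on $G$, using Lemma \ref{abs Vec equal} to collapse each half edge to a single scalar. Put $x_v:=\x^{\v}$ for every $v\in V(G)$. Since $\x$ has no zero entry, neither does $x$; in particular $x\ne 0$, so the only thing left to verify is the eigenvalue relation $\A(G)x=\la x$ (respectively $\Q(G)x=\la x$).

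First I would fix $v\in V(G)$, choose any vertex $a$ of the half edge $\v$, and rewrite the eigenvector equation (2.3) (respectively (2.4)) at $a$. The $k$-edges of $G^{k,\frac k2}$ through $a$ are exactly the $\v\mathbf{w}$ with $\{v,w\}\in E(G)$, and $\v\mathbf{w}=\{a\}\cup(\v\setminus\{a\})\cup\mathbf{w}$; multiplying the equation through by $\x_a$ then gives
$$\la\,\x_a^{\,k}=\x^{\v}\sum_{\{v,w\}\in E(G)}\x^{\mathbf{w}}=x_v\sum_{\{v,w\}\in E(G)}x_w,$$
and in the signless Laplacian case the same relation with $\la$ replaced by $\la-d_{\v}$, where $d_{\v}$ is the common degree in $G^{k,\frac k2}$ of the vertices of $\v$. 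I would also note that, under the convention $\A(\Go)=\A(G)$, $d_{\v}$ coincides with the degree $d_v$ of $v$ in $G$ (a loop at $v$ contributing $1$ to each).

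Second I would prove the identity $\x_a^{\,k}=x_v^{\,2}$. Since $\la\ne 0$ in the adjacency case and $\la\notin\{d_{\u}:u\in V(G)\}$ in the signless Laplacian case, Lemma \ref{abs Vec equal} applies to \emph{every} half edge and gives $|\x_a|=|\x_b|$ whenever $a,b$ lie in one half edge; as $k$ is even, $\x_a^{\,2}=\x_b^{\,2}$. Hence $x_v^{\,2}=(\x^{\v})^{2}=\prod_{b\in\v}\x_b^{\,2}=(\x_a^{\,2})^{k/2}=\x_a^{\,k}$, using $|\v|=k/2$. Substituting this into the display and cancelling the nonzero factor $x_v$ yields $\la x_v=\sum_{\{v,w\}\in E(G)}x_w$ (respectively $(\la-d_v)x_v=\sum_{\{v,w\}\in E(G)}x_w$) for every $v\in V(G)$, that is, $\A(G)x=\la x$ (respectively $\Q(G)x=\la x$); since $x\ne 0$, it is an eigenvector and the corollary follows. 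The step I expect to require the most care is exactly this identity $\x_a^{\,k}=x_v^{\,2}$, which is where evenness of $k$ and the hypothesis on $\la$ (ensuring Lemma \ref{abs Vec equal} for all half edges simultaneously) enter; the only other subtlety is the bookkeeping matching $d_{\v}$ with $d_v$ when $G$ carries loops.
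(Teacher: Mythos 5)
Your proposal is correct and follows essentially the same route as the paper's proof: multiply the eigenvector equation at a vertex of $\v$ by that vertex's coordinate, use Lemma \ref{abs Vec equal} together with the evenness of $k$ to identify $\x_a^k$ with $(\x^{\v})^2$, and cancel the nonzero factor $\x^{\v}$ to land on the matrix eigenvalue equation for $\A(G)$ (respectively $\Q(G)$). Your extra care with the identity $\x_a^k=x_v^2$ and with matching $d_{\v}$ to $d_v$ in the presence of loops only makes explicit what the paper leaves to "the discussion is similar."
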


\begin{proof}
First suppose that $\la$ is a nonzero $H$-eigenvalue of $\A(G^{k,\frac{k}{2}})$.
For each vertex $u \in \u$, by the eigenvector equation,
$$ \lambda \x_u^{k-1}=\sum_{\v: \u\v \in E(G^{k,\frac{k}{2}})}\x^{\u\v \backslash \{u\}}.$$
So $ \lambda \x_u^{k}=\sum_{\v: \u\v \in E(G^{k,\frac{k}{2}})}\x^{\u\v}$.
 Then $ \lambda \x^\u=\sum_{\v: \u\v \in E(G^{k,\frac{k}{2}})}\x^{\v}$ as $k$ is even and $\x_u^{k}=(\x^\u)^2 \ne 0$ by Lemma \ref{abs Vec equal}.
This implies that $\la x_u=\sum_{v: uv \in E(G)}x_v$, and $\la$ is an eigenvalue of $\A(G)$.
The discussion for the $\la$ being an eigenvalue of $\Q(G^{k,\frac{k}{2}})$ is similar.
\end{proof}

Now we discuss the general case that the $H$-eigenvector $\x$ in Corollary \ref{nonzero-H} may contain zero entries.
Let $\mathbf{U}$ be a union of half edges of $G^{k,\frac{k}{2}}$, where $G$ is a simple graph without loops.
Let $U=\{u: \u \subseteq \mathbf{U}\}$.
The tensor $\A(G^{k,\frac{k}{2}}[\mathbf{U}])$ (respectively, the matrix $\A(G)[U]$) is
exactly the adjacency tensor of $G^{k,\frac{k}{2}}[\mathbf{U}]$ (respectively, $G[U]$), the sub-hypergraph  of $G^{k,\frac{k}{2}}$ induced by $\mathbf{U}$ (respectively, the subgraph of $G$
induced by $U$).
However, $\Q(G^{k,\frac{k}{2}})[\mathbf{U}]$ (respectively, $\Q(G)[U]$) is not the signless Laplacian tensor of $G^{k,\frac{k}{2}}[\mathbf{U}]$ (respectively, $G[U]$).
The {\it modified induced subgraph} of a simple graph $G$ induced by the vertex set $U \subset V(G)$, denoted by $\Go[U]$, is
the induced subgraph $G[U]$ together with $d_v(G)-d_v(G[U])$ loops on each vertex $v \in U$.
Then the signless Laplacian matrix of $\Go[U]$ is exactly $\Q(G)[U]$, i.e. $\Q(\Go[U])=\Q(G)[U]$.
Therefore, $\Q(G^{k,\frac{k}{2}})[\mathbf{U}]=\Q(\Go[U]^{k,\frac{k}{2}})$.

\begin{theorem} \label{main1}
Let $\la$ be an $H$-eigenvalue of $\A(G^{k,\frac{k}{2}})$ (respectively, $\Q(G^{k,\frac{k}{2}})$) corresponding to an eigenvector $\x$,
where $G$ is a simple graph possibly with loops.
Suppose that $\la \ne 0$ as an eigenvalue of $\A(G^{k,\frac{k}{2}})$ or $\la \notin \{d_\u: u \in V(G)\}$ as an eigenvalue of $\Q(G^{k,\frac{k}{2}})$.
Let $\mathbf{U}=\cup\{\u: |\x_\u| > 0\}$ and $U=\{u: \u \subseteq \mathbf{U}\}$.
Then the following results hold.

{\em (1)} $G^{k,\frac{k}{2}}[\mathbf{U}]$ contains no isolated half edges, and hence $G[U]$ contains no isolated vertices.

{\em (2)} $\la$ is an $H$-eigenvalue of $\A(G^{k,\frac{k}{2}})[\mathbf{U}]$ (respectively, $\Q(G^{k,\frac{k}{2}})[\mathbf{U}]$) with eigenvector $\bar{\x}:=\x[\mathbf{U}]$.

{\em (3)} $\la$ is an eigenvalue of $\A(G[U])$ (respectively, $\Q(\Go[U])$) with the eigenvector $\bar{x}$ such that $\bar{x}_u=\bar{\x}^\u$ for each $u \in U$.
\end{theorem}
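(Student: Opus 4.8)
The plan is to establish the three parts in order, reading everything off from the eigenvector identities (2.3)--(2.4) and closing with an application of Corollary~\ref{nonzero-H}. The preliminary observation that drives the argument is this: because $\la\neq0$ (respectively $\la\notin\{d_\u:u\in V(G)\}$), Lemma~\ref{abs Vec equal} forces all vertices in a common half edge $\u$ of $G^{k,\frac{k}{2}}$ to receive the same modulus under $\x$; hence each half edge lies entirely in $\mathbf U$ or entirely outside it, $\mathbf U$ is genuinely a union of whole half edges, $U$ is well defined, and $\bar{\x}=\x[\mathbf U]$ has no zero entry. This last fact is precisely what will let us invoke Corollary~\ref{nonzero-H} in part~(3).

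For part~(1), fix a half edge $\u\subseteq\mathbf U$, so $\x_u\neq0$ for every $u\in\u$. In the adjacency case the $u$-coordinate of (2.3) reads $\la\,\x_u^{k-1}=\sum_{\v:\,\u\v\in E(G^{k,\frac{k}{2}})}\x^{\u\v\backslash\{u\}}$; its left-hand side is nonzero since $\la\neq0$, so some summand on the right is nonzero, meaning there is an edge $\u\v$ all of whose vertices carry a nonzero $\x$-value. Then $\v\subseteq\mathbf U$, so $\u\v$ survives in $G^{k,\frac{k}{2}}[\mathbf U]$ and $\u$ is not an isolated half edge there. In the signless Laplacian case the same reasoning applies with $\la$ replaced by $\la-d_\u$, which is again nonzero by hypothesis. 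Since an edge $\u\v\subseteq\mathbf U$ of $G^{k,\frac{k}{2}}$ corresponds to an edge $\{u,v\}$ of $G$ with $u,v\in U$, no vertex of $G[U]$ is isolated.

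For part~(2), I would compare, for each $w\in\mathbf U$, the quantity $\bigl(\A(G^{k,\frac{k}{2}})\x^{k-1}\bigr)_w$ with $\bigl(\A(G^{k,\frac{k}{2}})[\mathbf U]\,\bar{\x}^{k-1}\bigr)_w$: these two sums differ only by monomials involving a vertex outside $\mathbf U$, and each such monomial has a factor $\x_i=0$, so they are equal. Hence the restricted eigenvector equation $\A(G^{k,\frac{k}{2}})[\mathbf U]\,\bar{\x}^{k-1}=\la\,\bar{\x}^{[k-1]}$ holds, and $\bar{\x}\neq0$, so $\la$ is an $H$-eigenvalue of $\A(G^{k,\frac{k}{2}})[\mathbf U]$ with eigenvector $\bar{\x}$. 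The signless Laplacian case is identical once one notes that the diagonal entry $\bigl(\Q(G^{k,\frac{k}{2}})[\mathbf U]\bigr)_{w\cdots w}$ equals the degree $d_\u$ of $w$ in the ambient hypergraph $G^{k,\frac{k}{2}}$, so that (2.4) restricts verbatim; equivalently, $\Q(G^{k,\frac{k}{2}})[\mathbf U]=\Q\bigl((\Go[U])^{k,\frac{k}{2}}\bigr)$, as recorded just before the theorem.

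For part~(3), I would apply Corollary~\ref{nonzero-H} taking the base graph to be $\Go[U]$, so that $\A\bigl((\Go[U])^{k,\frac{k}{2}}\bigr)=\A(G^{k,\frac{k}{2}})[\mathbf U]$ and $\Q\bigl((\Go[U])^{k,\frac{k}{2}}\bigr)=\Q(G^{k,\frac{k}{2}})[\mathbf U]$. By part~(2), $\la$ is an $H$-eigenvalue of this tensor with eigenvector $\bar{\x}$, and $\bar{\x}$ has no zero entry by the preliminary observation; moreover $\la\neq0$ in the adjacency case, while in the signless Laplacian case the relevant half-edge degrees are $d_u(\Go[U])=d_u(G)$ for $u\in U$, which lie among $\{d_\u:u\in V(G)\}$, so $\la$ avoids them. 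Corollary~\ref{nonzero-H} then delivers $\la$ as an eigenvalue of $\A(G[U])$ (respectively $\Q(\Go[U])$) with eigenvector $\bar{x}$ given by $\bar{x}_u=\bar{\x}^\u$ for $u\in U$, which is exactly what is claimed. I expect the only delicate point to be the loop bookkeeping in the signless Laplacian case, namely verifying that the degrees built into $\Q(G^{k,\frac{k}{2}})[\mathbf U]$ match those of $\Q(\Go[U])$ and that the exclusion hypothesis on $\la$ survives restriction; the remainder is a direct transcription of the eigenvector equations.
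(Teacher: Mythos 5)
Your proposal is correct and follows essentially the same route as the paper: parts (1) and (2) are read off from the eigenvector equations (2.3)--(2.4) after using Lemma \ref{abs Vec equal} to see that $\mathbf{U}$ is a union of whole half edges and $\bar{\x}$ has no zero entries, and part (3) is obtained by identifying $\A(G^{k,\frac{k}{2}})[\mathbf{U}]$ with $\A(G[U]^{k,\frac{k}{2}})$ (respectively $\Q(G^{k,\frac{k}{2}})[\mathbf{U}]$ with $\Q(\Go[U]^{k,\frac{k}{2}})$) and invoking Corollary \ref{nonzero-H}. You simply supply the details (including the degree bookkeeping for the modified induced subgraph) that the paper leaves as ``easy to verify.''
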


\begin{proof}
By (2.3) or (2.4), it is easy to verify the assertion (1) or (2).
Note that $\A(G^{k,\frac{k}{2}})[\mathbf{U}]=\A(G[U]^{k,\frac{k}{2}})$ and
$\Q(G^{k,\frac{k}{2}})[\mathbf{U}]=\Q(\Go[U]^{k,\frac{k}{2}})$, the assertion (3) follows from Lemma \ref{nonzero-H} as $\bar{\x}$ contains no zero entries.
\end{proof}

\begin{lemma}\label{contain} Let $G$ be a simple graph possibly with loops.
Each eigenvalue of $\A(G)$ (respectively, $\Q(G)$) is an $H$-eigenvalue of $\A(G^{k,\frac{k}{2}})$ (respectively, $\Q(G^{k,\frac{k}{2}})$).
\end{lemma}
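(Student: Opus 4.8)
The plan is to lift a matrix eigenvector of $G$ to an $H$-eigenvector of $G^{k,\frac{k}{2}}$ by spreading its value, uniformly up to a single sign, over each half edge. Since $\A(G)$ and $\Q(G)$ are real symmetric matrices, any eigenvalue $\la$ of either is real and admits a real eigenvector $y=(y_u)_{u\in V(G)}\neq 0$; fix such a $y$. For each $u\in V(G)$ write the half edge of $G^{k,\frac{k}{2}}$ over $u$ as $\u=\{u^{(1)},\dots,u^{(k/2)}\}$, and define $\x$ on $V(G^{k,\frac{k}{2}})$ by $\x_{u^{(j)}}=|y_u|^{2/k}$ for $1\le j\le\frac{k}{2}-1$ and $\x_{u^{(k/2)}}=\sgn(y_u)\,|y_u|^{2/k}$ (this last entry is $0$ when $y_u=0$). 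The sign is placed on one coordinate because, when $\frac{k}{2}$ is even, a negative value of $y_u$ has no real $(\frac{k}{2})$-th root, so we reproduce it by flipping a single factor in the product $\x^\u$. As $k$ is even, this yields, for every $u\in V(G)$, the two identities $\x^\u=y_u$ and $\x_{u^{(j)}}^{\,k}=y_u^{\,2}$ for all $j$; these are the only properties of $\x$ that the verification uses, and $\x\neq 0$ since $y\neq 0$.

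I would then verify the eigenvector equations directly. Consider the adjacency case at a vertex $u^{(i)}\in\u$: the edges of $G^{k,\frac{k}{2}}$ containing $u^{(i)}$ are exactly the $k$-sets $\u\v$ with $uv\in E(G)$, so by interpretation (2.3) the equation to be checked is $\la\,\x_{u^{(i)}}^{\,k-1}=\sum_{v:\,uv\in E(G)}\x^{\u\v\backslash\{u^{(i)}\}}$. Multiplying both sides by $\x_{u^{(i)}}$ and using that distinct half edges are disjoint, the right-hand side becomes $\sum_{v:\,uv\in E(G)}\x^\u\x^\v=y_u\sum_{v:\,uv\in E(G)}y_v$ and the left-hand side becomes $\la\,\x_{u^{(i)}}^{\,k}=\la\,y_u^{\,2}$; these are equal because $\la y_u=\sum_{v:\,uv\in E(G)}y_v$ is the eigenvalue equation for $\A(G)$ at $u$. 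If $\x_{u^{(i)}}\neq 0$, dividing by it recovers the claimed equation; if $\x_{u^{(i)}}=0$, then $y_u=0$, hence all $\frac{k}{2}$ coordinates of $\x$ on $\u$ vanish, and since $\frac{k}{2}-1\ge 1$ both sides of the $u^{(i)}$-equation are already $0$. Thus $\A(G^{k,\frac{k}{2}})\x^{k-1}=\la\,\x^{[k-1]}$, so $\la$ is an $H$-eigenvalue of $\A(G^{k,\frac{k}{2}})$.

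The signless Laplacian case is identical, using (2.4) in place of (2.3) together with the fact that every vertex of $\u$ has degree $d_\u=d_u(G)$ in $G^{k,\frac{k}{2}}$ (each loop of $G$ at $u$ becomes the loop $\u$ and raises the degree of every vertex of $\u$ by $1$ without affecting $\A$): multiplying $(\la-d_\u)\x_{u^{(i)}}^{\,k-1}=\sum_{v:\,uv\in E(G)}\x^{\u\v\backslash\{u^{(i)}\}}$ by $\x_{u^{(i)}}$ reduces it, exactly as above, to $(\la-d_u)\,y_u^{\,2}=y_u\sum_{v:\,uv\in E(G)}y_v$, which is the $\Q(G)$-eigenvalue equation (2.4) at $u$ multiplied by $y_u$; the vanishing case is handled as before. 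I do not anticipate a genuine obstacle: the only points demanding care are the parity issue forcing the single sign flip (so that negative coordinates of $y$ are reproduced even when $\frac{k}{2}$ is even) and the bookkeeping at half edges on which $\x$ vanishes, where the eigenvector equation must be checked by hand rather than by cancelling $\x_{u^{(i)}}$.
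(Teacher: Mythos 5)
Your proposal is correct and follows essentially the same route as the paper: both lift a real eigenvector $y$ of $\A(G)$ (or $\Q(G)$) to $G^{k,\frac{k}{2}}$ by assigning $|y_u|^{2/k}$ to the vertices of the half edge $\u$ with the sign of $y_u$ placed on exactly one of them, so that $\x^{\u}=y_u$, and then verify (2.3)/(2.4) coordinatewise using $\la y_u=\sum_{uv\in E(G)}y_v$. Your explicit treatment of the half edges where $y_u=0$ is a small point the paper leaves to the reader, but the argument is the same.
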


\begin{proof}
For each vertex $v$ of $G$, we assume that $v$ is also contained in the corresponding half edge $\v$ in $G^{k,\frac{k}{2}}$.
Let $x$ be an eigenvector of $\A(G)$ corresponding to an eigenvalue $\lambda$.
Let $\x$ be a vector defined on $G^{k,\frac{k}{2}}$ as follows:
$$ \x_v=\sgn(x_v)|x_v|^{2/k}, \x_{\bar{v}}=|x_v|^{2/k}, \mbox{~for each vertex~} \bar{v} \in \v \backslash \{v\} \mbox{~and each~} v \in V(G).\eqno(3.1)$$
Then $$\x^{\v}=x_v, \mbox{~for each~} v \in V(G).\eqno(3.2)$$
Noting that $\lambda x_v=\sum_{uv \in E(G)}x_u$, it is easy to verify that
$$ \lambda \x_v^{k-1}=\sum_{\u\v \in E(G^{k,\frac{k}{2}})}\x^{\u\v \backslash \{v\}} \mbox{~and~}
\lambda \x_{\bar{v}}^{k-1}=\sum_{\u\v \in E(G^{k,\frac{k}{2}})}\x^{\u\v \backslash \{\bar{v}\}} \mbox{~for each~}  \bar{v} \in \v \backslash \{v\}. $$
So, $\lambda$ is also an eigenvalue of $\A(G^{k,\frac{k}{2}})$.

Similarly, if $x$ is an eigenvector of $\Q(G)$ corresponding to an eigenvalue $\lambda$, then $(\lambda-d_v) x_v=\sum_{uv \in E(G)}x_u$.
Defining a vector $\x$ as in (3.1), we get $ (\lambda-d_v) \x_v^{k-1}=\sum_{\u\v \in E(G^{k,\frac{k}{2}})}\x^{\u\v \backslash \{v\}}$ for each vertex $v$ of $G^{k,\frac{k}{2}}$.
\end{proof}

A simple graph consisting of only one vertex possibly with loops is considered to be connected.

\begin{theorem}\label{main2}
Let $G$ be a simple graph without loops.
Then, regardless of multiplicities, the $H$-spectrum of $\A(G^{k,\frac{k}{2}})$ (respectively, $\Q(G^{k,\frac{k}{2}})$)
 consists of all eigenvalues of the adjacency matrices (respectively, the signless Laplacian matrices) of the connected induced subgraphs (respectively, modified induced subgraphs) of $G$.
\end{theorem}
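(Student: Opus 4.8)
\textsf{Proof proposal.}
The plan is to prove the asserted set-equality by two inclusions, running the adjacency case and the signless Laplacian case in parallel, with ``modified induced subgraph'' replacing ``induced subgraph'' in the latter. Throughout I would use three facts already in place: for $U\subseteq V(G)$ and $\mathbf{U}=\cup_{u\in U}\u$ one has $\A(G^{k,\frac{k}{2}})[\mathbf{U}]=\A(G[U]^{k,\frac{k}{2}})$ and $\Q(G^{k,\frac{k}{2}})[\mathbf{U}]=\Q(\Go[U]^{k,\frac{k}{2}})$ (recorded before Theorem \ref{main1}); every edge of $G^{k,\frac{k}{2}}$ is a union $\u\v$ of two half-edges with $\{u,v\}\in E(G)$; and $d_\u(G^{k,\frac{k}{2}})=d_u(G)=d_u(\Go[U])$, the loops of $\Go[U]$ being there precisely to force the last equality. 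The only extra structural ingredient needed is that $\frac{k}{2}\ge 2$.

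For the inclusion ``spectra of connected (modified) induced subgraphs $\subseteq$ $H$-spectrum'', I would start from a connected induced subgraph $H=G[U]$ and an eigenvalue $\lambda$ of $\A(H)$ (respectively a connected modified induced subgraph $\Go[U]$ and an eigenvalue $\lambda$ of $\Q(\Go[U])$), apply Lemma \ref{contain} to $H$ (respectively to the simple graph with loops $\Go[U]$) to get a real eigenvector $\x$ on $\mathbf{U}$ for $\lambda$ as an $H$-eigenvalue of $\A(G^{k,\frac{k}{2}})[\mathbf{U}]$ (respectively $\Q(G^{k,\frac{k}{2}})[\mathbf{U}]$), and then extend $\x$ by zeros to $\bar\x$ on $V(G^{k,\frac{k}{2}})$. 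The key step is to verify that $\bar\x$ satisfies the eigenvector equation (2.3) (respectively (2.4)) for the full tensor: at a vertex $w\notin\mathbf{U}$ both sides vanish since $w$ lies in a half-edge disjoint from $\mathbf{U}$ and, as $\frac{k}{2}\ge 2$, every edge through $w$ contains a further zero-valued vertex of that half-edge; at a vertex $w\in\u$ with $u\in U$ the edges $\u\v$ with $v\notin U$ contribute nothing for the same reason, the remaining edges are exactly the edges of $H$ at $u$ because $H$ is induced, and the degree at $w$ is unchanged. Hence $\lambda$ is an $H$-eigenvalue of $\A(G^{k,\frac{k}{2}})$ (respectively $\Q(G^{k,\frac{k}{2}})$).

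For the reverse inclusion, I would take an $H$-eigenvalue $\lambda$ with real eigenvector $\x$ and split into cases. If $\lambda=0$ (respectively $\lambda=d_\u$ for some vertex $u$), then $\lambda$ is an eigenvalue of the $1\times 1$ matrix $\A(G[\{u\}])=[0]$ for any single vertex $u$ (respectively of $\Q(\Go[\{u\}])=[d_u(G)]$ for that vertex $u$), and a one-vertex graph (possibly with loops) counts as connected, so $\lambda$ lies in the target set. Otherwise $\lambda\ne0$ (respectively $\lambda\notin\{d_\u:u\in V(G)\}$), and I would invoke Theorem \ref{main1} with $\mathbf{U}=\cup\{\u:|\x_\u|>0\}$ (well-defined by Lemma \ref{abs Vec equal}, and nonempty as $\x\ne0$) and $U=\{u:\u\subseteq\mathbf{U}\}$: this gives that $G[U]$ has no isolated vertices and that $\lambda$ is an eigenvalue of $\A(G[U])$ (respectively $\Q(\Go[U])$). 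Since $G[U]$ has no isolated vertices, each of its components $H_i$ is a connected induced subgraph of $G$, the modified induced subgraph $\Go[U]$ decomposes as the disjoint union of the $\Go[V(H_i)]$ (using the degree identity once more), and the spectrum of $\A(G[U])$ (respectively $\Q(\Go[U])$) is the union of the spectra of the blocks $\A(H_i)$ (respectively $\Q(\Go[V(H_i)])$); so $\lambda$ is an eigenvalue of one of these, finishing the inclusion.

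The step I expect to be the main obstacle is the zero-extension argument in the first inclusion: lifting an $H$-eigenvalue from a principal subtensor to the whole tensor is false for general tensors, and it works here only because $\frac{k}{2}\ge 2$ rules out an edge of $G^{k,\frac{k}{2}}$ having all but one of its vertices inside $\mathbf{U}$. The other point to watch is the degree bookkeeping in the signless Laplacian case, but the identity $d_\u(G^{k,\frac{k}{2}})=d_u(G)=d_u(\Go[U])$ disposes of it, after which the $\Q$-argument follows the $\A$-argument verbatim; combining the two inclusions gives the theorem.
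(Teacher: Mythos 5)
Your proposal is correct and follows essentially the same route as the paper's proof: Lemma \ref{contain} plus zero-extension for one inclusion, and Theorem \ref{main1} plus restriction to a connected component for the other. The details you supply (the role of $\frac{k}{2}\ge 2$ in making the zero-extension work, and the block decomposition of $\Go[U]$ over components) are exactly the steps the paper dismisses with ``it is easy to verify'' and ``surely,'' so nothing is missing and nothing differs in substance.
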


\begin{proof}
Suppose that $\la$ is an $H$-eigenvalue of $\A(G^{k,\frac{k}{2}})$.
If $\la =0$, then $\la$ is the eigenvalue of adjacency matrix of an isolated vertex.
Assume that $\la \ne 0$.
By Theorem \ref{main1}, $\la$ is an eigenvalue of the adjacency matrix of some induced subgraph of $G$.
Surely, $\la$ is an eigenvalue of the adjacency matrix of some connected induced subgraph of $G$.

Conversely, if $\la $ is an eigenvalue of the adjacency matrix of some connected induced subgraph $G[U]$ with $x$ as a corresponding eigenvector,
then by Lemma \ref{contain}, $\la$ is an eigenvalue of $G[U]^{k,\frac{k}{2}}=G^{k,\frac{k}{2}}[\mathbf{U}]$ with eigenvector $\x$ as defined in (3.1),
where $\mathbf{U}=\cup\{\u: u \in U\}$.
Extending the eigenvector $\x$ defined on $G^{k,\frac{k}{2}}[\mathbf{U}]$ to $G^{k,\frac{k}{2}}$ by assigning zeros to the vertices outside $\mathbf{U}$,
we will get a vector $\mathbf{y}$.
It is easy to verify by (2.3) that $\mathbf{y}$ is an eigenvector of $\A(G^{k,\frac{k}{2}})$ corresponding the eigenvalue $\la$.

Next assume that $\la$ is an $H$-eigenvalues of $\Q(G^{k,\frac{k}{2}})$.
If $\la=d_\v$ for some half edge $\v$, then $\la$ is eigenvalue of the signless Laplacian matrix of the modified subgraph induced by the isolated vertex $v$.
If $\la \notin \{d_\u: u \in G\}$, by Theorem \ref{main1}, $\la$ is an eigenvalue of the signless Laplacian matrix of some connected modified induced subgraph of $G$.
Conversely, if $\la $ is an eigenvalue of $\Q(\Go[U])$ with $x$ as a corresponding eigenvector,
by Lemma \ref{contain}, $\la$ is an eigenvalue of $\Q(\Go[U]^{k,\frac{k}{2}})=\Q(G^{k,\frac{k}{2}})[\mathbf{U}]$ with eigenvector $\x$ as defined in (3.1).
The remaining discussion is similar.
\end{proof}

\begin{corollary}\label{LE relation}
Let $G$ be a simple graph.
Then $\lamin^\A(G)=\lamin^\A(G^{k,\frac{k}{2}})$ and $\lamin^\Q(G)=\lamin^\Q(G^{k,\frac{k}{2}})$.
\end{corollary}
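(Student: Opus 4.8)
The plan is to read off both identities directly from Theorem \ref{main2}, combined with the matrix case ($k=2$) of the interlacing Lemma \ref{interlace}. For the adjacency identity: by Theorem \ref{main2} the set of $H$-eigenvalues of $\A(G^{k,\frac{k}{2}})$, ignored multiplicities, equals the union over all connected induced subgraphs $H$ of $G$ of the spectra of $\A(H)$ (with the convention that a single vertex contributes the eigenvalue $0$). Since $\lamin$ of a finite set depends only on the underlying set and not on multiplicities, this gives $\lamin^\A(G^{k,\frac{k}{2}})=\min_H \lamin(\A(H))$, the minimum over connected induced subgraphs $H$ of $G$. I would then invoke Lemma \ref{interlace} with $k=2$: for every induced subgraph $H=G[U]$ the matrix $\A(H)=\A(G)[U]$ is a principal submatrix of $\A(G)$, so $\lamin(\A(H))\ge \lamin^\A(G)$, whence $\lamin^\A(G^{k,\frac{k}{2}})\ge \lamin^\A(G)$. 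For the reverse inequality I would pick a connected component $C$ of $G$ with $\lamin(\A(C))=\lamin^\A(G)$ (such a component exists because $\A(G)$ is block-diagonal over its components); then $C$ is a connected induced subgraph of $G$, so $\lamin^\A(G)=\lamin(\A(C))$ is one of the listed $H$-eigenvalues and $\lamin^\A(G^{k,\frac{k}{2}})\le \lamin^\A(G)$, giving equality.

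The signless Laplacian identity runs in complete parallel, replacing ``connected induced subgraph $G[U]$'' by ``connected modified induced subgraph $\Go[U]$'' and $\A$ by $\Q$, and using the identity $\Q(\Go[U])=\Q(G)[U]$ recorded before Theorem \ref{main1}, so that $\Q(\Go[U])$ is again a principal submatrix of $\Q(G)$ and Lemma \ref{interlace} yields $\lamin(\Q(\Go[U]))\ge \lamin^\Q(G)$, hence $\lamin^\Q(G^{k,\frac{k}{2}})\ge \lamin^\Q(G)$. The one point needing a little care is the reverse inequality: I need a connected modified induced subgraph whose least signless Laplacian eigenvalue is exactly $\lamin^\Q(G)$. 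Taking $U=V(C)$ for a component $C$ of $G$ achieving the minimum, and using that $G$ has no loops and every edge meeting $V(C)$ lies inside $C$, one has $d_v(G)-d_v(G[V(C)])=0$ for all $v\in V(C)$; thus no loops are added, $\Go[V(C)]=C$, and $\lamin(\Q(\Go[V(C)]))=\lamin^\Q(C)=\lamin^\Q(G)$.

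I do not expect a genuine obstacle here: all the content sits in Theorem \ref{main2}, and the corollary is essentially ``the minimum over a family of induced (modified) subgraphs, one member of which is a whole component of $G$, equals $\lamin$ of $G$ itself.'' The only things I would be careful to state explicitly are (i) that passing to the spectrum ``regardless of multiplicities'' is harmless because $\lamin$ sees only the set of values; (ii) that $G$ need not be connected, so the extremal connected (modified) induced subgraph in the reverse inequality is a component of $G$ rather than $G$ as a whole; and (iii) in the signless Laplacian case, the verification that restricting to a full component introduces no loops, so that the modified induced subgraph there coincides with the component. The extra eigenvalues $0$ (adjacency) and $d_\v$ (signless Laplacian) coming from isolated vertices create no issue, since they arise as eigenvalues of one-vertex (connected) induced subgraphs and are therefore automatically $\ge \lamin^\A(G)$, respectively $\ge \lamin^\Q(G)$, by the same interlacing.
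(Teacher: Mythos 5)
Your proposal is correct and follows essentially the same route as the paper: identify the $H$-spectrum of $\A(G^{k,\frac{k}{2}})$ (respectively, $\Q(G^{k,\frac{k}{2}})$) via Theorem \ref{main2} with the union of spectra of the connected (modified) induced subgraphs, and then use interlacing for principal submatrices of real symmetric matrices to see that the minimum of that union is $\lamin^\A(G)$ (respectively, $\lamin^\Q(G)$). The paper cites the classical interlacing theorem from Horn--Johnson in one line where you invoke the $k=2$ case of Lemma \ref{interlace}; your extra care about disconnected $G$ (taking a minimizing component) and about the modified induced subgraph on a full component acquiring no loops fills in details the paper leaves implicit, but is not a different argument.
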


\begin{proof}
By the interlacing theorem of the eigenvalues of real symmetric matrices (see \cite[Chapter 4]{hc}), $\lamin^\A(G)$ (respectively, $\lamin^\Q(G)$) is the minimum of all least eigenvalues of the principal submatrices of $\A(G)$ (respectively, $\Q(G)$).
The result follows from Theorem \ref{main2}.
\end{proof}

\vspace{3mm}

\noindent{\scshape  Remark}:
(1) The relationship between a simple graph $G$ and $G^{k,s}$ for a general $s < k/2$ has been discussed by Yuan, Shao and Qi \cite{YQS}.
An interesting result is that $\rho^\A(G^{k,s})=\rho^\A(G)^{\frac{2s}{k}}$; see \cite{YQS}.
%Surely, $\lamin^\A(G^{k,s})=\lamin^\A(G)^{\frac{2s}{k}}$ for $s < k/2$ as $G^{k,s}$ is odd-bipartite and its $H$-spectrum is symmetric with respect to the origin.

(2) We give two examples to illustrate Theorem \ref{main2}.
Denote by $P_n$ a simple path on $n$ vertices.
Then $P_2^{k,\frac{k}{2}}$ is a hypergraph on vertices $1,2,\ldots,k$ with only one edge $\{1,2,\ldots,k\}$.
It is known that the spectrum of $\A(P_2)$ is $\{1,-1\}$, and the spectrum of $\Q(P_2)$ is $\{0,2\}$.
We now compute the eigenvalues of $\A(P_2^{k,\frac{k}{2}})$ and $\Q(P_2^{k,\frac{k}{2}})$.
Let $\la$ be an eigenvalue of $\A(P_2^{k,\frac{k}{2}})$ corresponding to an eigenvector $x$.
Then by (2.3), for $i=1,2,\ldots,k$,
$ \la x_i^{k-1}=\Pi_{j \in [k],j \ne i} x_j$.
Just like the discussion in Corollary \ref{boundQ}, if $x_i=0$ for some $i$, then $\la=0$;
otherwise, $\la^k=1$.
Each $\la_j=e^{\frac{2\pi j}{k}\i} ~(j \in [k], \i^2=-1)$ is an eigenvalue of $\A(P_2^{k,\frac{k}{2}})$ with the eigenvector $\x^{(j)}$ defined as following: for any chosen $j$-subset $S$ of $[k]$,
$\x^{(j)}_i=e^{\frac{2\pi }{k}\i}$ if $i \in S$, and $\x^{(j)}_i=1$ otherwise.
So the eigenvalues of $\A(P_2^{k,\frac{k}{2}})$ are $0, \la_{k}=1,\la_{k/2}=-1$, and $\la_j$, $j \in [k]\backslash \{k/2,k\}$.
As $\Q(P_2^{k,\frac{k}{2}})=\mathcal{I}+\A(P_2^{k,\frac{k}{2}})$, the eigenvalues of $\Q(P_2^{k,\frac{k}{2}})$ are $1$ (the degree), $2$, $0$, and $1+\la_j$, $j \in [k]\backslash \{k/2,k\}$.
The real eigenvalue $\la_{k/2}=-1$ of $\A(P_2^{k,\frac{k}{2}})$ (respectively, the zero eigenvalue of $\Q(P_2^{k,\frac{k}{2}})$)  have two eigenvectors: one is an $N$-eigenvectors $\x^{(k/2)}$, the other is an eigenvector $y$ defined by $y_i=-1$ for an arbitrary fixed $i$ and $y_j=1$ for all $j \ne i$.

It is known that the spectrum of $\A(P_3)$ is $\{\sqrt{2},0,-\sqrt{2}\}$, and the spectrum of $\Q(P_3)$ is $\{0,1,3\}$.
The connected induced subgraphs of $P_3$ are $P_1,P_2,P_3$.
By Theorem \ref{main2}, the $H$-eigenvalues of $\A(P_3^{k,\frac{k}{2}})$ are $0$, $-1$, $1$, $\sqrt{2}$, $-\sqrt{2}$.
The connected modified induced subgraphs of $P_3$ are a vertex with one loop, a vertex with two loops, an edge with one loop on some vertex, and $P_3$.
The corresponding signless Laplacian matrices are $[1], [2], \left[\begin{array}{cc} 2 & 1 \\ 1 & 1 \end{array}\right], \Q(P_3)$.
So the $H$-eigenvalues of $\Q(P_3^{k,\frac{k}{2}})$ are $0,1,2,3, \frac{3 \pm \sqrt{5}}{2}$.

\section{Limit points of the least adjacency $H$-eigenvalues}
Hoffman \cite{Hof} observed if a simple graph $G$ properly contains a cycle, then $\rho(\A(G)) > \tau^{1/2}+\tau^{-1/2}=\tau^{3/2}=\sqrt{2+\sqrt{5}}$, where $\tau=(\sqrt{5}+1)/2$ is the golden mean.
He proved that $\tau^{3/2}$ is a limit point, and found all limit points of the adjacency spectral radii less than $\tau^{3/2}$.
The work of Hoffman was extended by Shearer \cite{S} to show that every real number $r \ge \tau^{3/2}$ is the limit point of the adjacency spectral radii of simple graphs.
Furthermore, Doob \cite{Doob} proved that for each $r \ge \tau^{3/2}$ (respectively, $r \le -\tau^{3/2}$) and for any $k$, there exists a sequence of graphs whose $k$th largest eigenvalues (respectively, $k$th smallest eigenvalues) converge to $r$.

The smallest limit point of the adjacency spectral radii of simple graphs is $2$, which is realized by a sequence of paths.
If $r < \tau^{3/2}$ is a limit point, it suffices to consider the trees by Hoffman's observation.
The construction of graphs whose adjacency spectral radii converge to $r \ge \tau^{3/2}$ in \cite{Doob,S} are trees $T(n_1,n_2,\ldots,n_k)$ called {\it caterpillars}, which is obtained from a path on vertices $v_1,v_2,\ldots,v_k$ by attaching $n_j \ge 0$ pendant edges at the vertex $v_j$ for each $j=1,2,\ldots,k$.

As the adjacency spectrum of a tree is symmetric with respect to the origin, the minus of the limit points of the spectral radius are the limit points of the least eigenvalue.
Since $-\rho^\A(T)=\lamin^\A(T)=\lamin^\A(T^{k,{k \over 2}})$ by Corollary \ref{LE relation}, we get the following result on the limit points of the least adjacency $H$-eigenvalues of $k$-uniform hypergraphs.

\begin{theorem}
For $n=1,2,\ldots$, let $\beta_n$ be the positive root of $P_n(x)=x^{n+1}-(1+x+x^2+\cdots+x^{n-1})$.
Let $\alpha_n=\beta_n^{1/2}+\beta_n^{-1/2}$. Then
$-2=-\alpha_1 >- \alpha_2 > \cdots $ are all limit points of the least $H$-eigenvalues of the adjacency tensor of hypergraphs greater than $-(\tau^{1/2}+\tau^{-1/2})=-\lim_n \alpha_n$.
\end{theorem}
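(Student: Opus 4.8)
The plan is to reduce the statement to Hoffman's classification of the limit points of the adjacency spectral radii of trees, the bridge being Corollary~\ref{LE relation} together with the symmetry about the origin of the adjacency spectrum of a bipartite graph. The only hypergraphs that enter are the generalized powers $T^{k,\frac{k}{2}}$ of trees $T$: since a tree is bipartite we have $\lamin^\A(T)=-\rho^\A(T)$, and Corollary~\ref{LE relation} upgrades this to $\lamin^\A(T^{k,\frac{k}{2}})=\lamin^\A(T)=-\rho^\A(T)$ for every even $k$. Hence $\{\lamin^\A(T^{k,\frac{k}{2}}):T\text{ a tree}\}=\{-\rho^\A(T):T\text{ a tree}\}$ as sets of reals, and a real number $L$ is a limit point of the former precisely when $-L$ is a limit point of $\{\rho^\A(T):T\text{ a tree}\}$.

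Next I would assemble the classical facts from \cite{Hof,S,Doob}. By Hoffman's theorem the limit points of $\{\rho^\A(T):T\text{ a tree}\}$ lying below $\tau^{3/2}$ are exactly $\alpha_1<\alpha_2<\cdots$; the Shearer--Doob constructions, which use caterpillars (hence trees), show in addition that every $r\ge\tau^{3/2}$ is a limit point; and $\alpha_1=2$ is the smallest limit point, since a tree with $\rho^\A<2$ is one of the Dynkin diagrams $A_n,D_n,E_6,E_7,E_8$, of which only $A_n$ (paths) and $D_n$ form infinite families, their spectral radii tending to $2$. Thus the limit points of $\{\rho^\A(T):T\text{ a tree}\}$ are precisely $\{\alpha_n:n\ge1\}\cup[\tau^{3/2},\infty)$; negating, the limit points of the least adjacency $H$-eigenvalues of the hypergraphs $T^{k,\frac{k}{2}}$ are $\{-\alpha_n:n\ge1\}\cup(-\infty,-\tau^{3/2}]$. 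Once it is known that $-2\ge-\alpha_n>-\tau^{3/2}$ for all $n$, the members of this set that exceed $-(\tau^{1/2}+\tau^{-1/2})=-\tau^{3/2}$ are exactly the $-\alpha_n$. The converse inclusion needs nothing new: Hoffman supplies, for each $n$, pairwise non-isomorphic trees $T^{(n)}_m$ with $\rho^\A(T^{(n)}_m)\to\alpha_n$, whence the $(T^{(n)}_m)^{k,\frac{k}{2}}$ are pairwise non-isomorphic hypergraphs with $\lamin^\A\to-\alpha_n$.

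The remaining work is the elementary but fiddly analysis of the $\alpha_n$. Multiplying $P_n(x)=x^{n+1}-(1+x+\cdots+x^{n-1})$ by $x-1$ gives $(x-1)P_n(x)=x^{n+2}-x^{n+1}-x^n+1=x^n(x^2-x-1)+1=:g_n(x)$, so for $n\ge2$ the positive root $\beta_n$ of $P_n$ is the unique root of $g_n$ in $(1,\tau)$: one has $g_n(1)=0$, $g_n(\tau)=\tau^n(\tau^2-\tau-1)+1=1>0$, and $g_n(x)=0$ is equivalent to $x^2-x-1+x^{-n}=0$, whose left-hand side is strictly convex on $(1,\tau)$ with value $0$ and negative derivative at $x=1$. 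From $g_n(\beta_n)=0$ we get $\beta_n^{\,n}(\beta_n^2-\beta_n-1)=-1$, hence $g_{n+1}(\beta_n)=\beta_n\bigl(\beta_n^{\,n}(\beta_n^2-\beta_n-1)\bigr)+1=1-\beta_n<0$, while $g_{n+1}(\tau)=1>0$; therefore $\beta_n<\beta_{n+1}<\tau$, and the same identity, using $\beta_n>1$, forces $\beta_n^2-\beta_n-1\to0$, i.e.\ $\beta_n\uparrow\tau$. Since $t\mapsto t^{1/2}+t^{-1/2}$ is strictly increasing on $[1,\infty)$ and $\beta_1=1$, it follows that $2=\alpha_1<\alpha_2<\cdots$ with $\alpha_n\uparrow\tau^{1/2}+\tau^{-1/2}=\tau^{-1/2}(\tau+1)=\tau^{-1/2}\tau^2=\tau^{3/2}=\sqrt{2+\sqrt{5}}$, so that $-2=-\alpha_1>-\alpha_2>\cdots>-\tau^{3/2}$, which is the assertion.

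The step I expect to require the most care is the correct framing of the competing family of hypergraphs — making sure it is enough to range over generalized powers of trees, so that bipartiteness together with Corollary~\ref{LE relation} bring the problem entirely inside Hoffman's graph-theoretic picture — together with the monotonicity and convergence bookkeeping for $\beta_n$ displayed above; Hoffman's classification of the tree spectral-radius limit points below $\tau^{3/2}$ is itself invoked as a cited black box.
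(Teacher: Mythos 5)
Your proposal is correct and follows exactly the route the paper takes (the paper states this theorem with no separate proof, relying on the preceding paragraph: Hoffman's classification of the limit points of tree spectral radii below $\tau^{3/2}$, the symmetry $\lamin^\A(T)=-\rho^\A(T)$ for bipartite $T$, and Corollary \ref{LE relation} to transfer this to $T^{k,\frac{k}{2}}$). Your additional bookkeeping verifying $\beta_n\uparrow\tau$ and $2=\alpha_1<\alpha_2<\cdots\uparrow\tau^{3/2}$ is sound but is material the paper delegates to the citation of Hoffman.
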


\begin{theorem}
Each real number $r \le -\tau^{3/2}$ is a limit point of the least $H$-eigenvalue of the adjacency tensor of hypergraphs.
\end{theorem}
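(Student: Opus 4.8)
The plan is to deduce the statement directly from Corollary~\ref{LE relation} together with the classical density results of Hoffman~\cite{Hof}, Shearer~\cite{S} and Doob~\cite{Doob} on the adjacency spectral radii of trees. Fix once and for all an even integer $k\ge 4$, so that $T^{k,\frac{k}{2}}$ is a genuine $k$-uniform hypergraph for every simple graph $T$. Let $r\le -\tau^{3/2}$ be given; then $-r\ge \tau^{3/2}=\tau^{1/2}+\tau^{-1/2}$, where $\tau=(\sqrt{5}+1)/2$ as in Section~4.

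First I would invoke Shearer's theorem~\cite{S} (or Doob's refinement~\cite{Doob}): since $-r\ge\tau^{3/2}$, there is a sequence of caterpillars $T_1,T_2,\ldots$ with pairwise distinct adjacency spectral radii such that $\rho^\A(T_n)\to -r$. Each $T_n$, being a tree, is bipartite, so $\A(T_n)$ has spectrum symmetric about the origin (the $k=2$ case of the result of Shao et al.~\cite{SSW} recalled in the introduction), whence $\lamin^\A(T_n)=-\rho^\A(T_n)$ and therefore $\lamin^\A(T_n)\to r$ through pairwise distinct values. Applying Corollary~\ref{LE relation} to the simple graph $T_n$ gives $\lamin^\A(T_n^{k,\frac{k}{2}})=\lamin^\A(T_n)$, so $\lamin^\A(T_n^{k,\frac{k}{2}})\to r$, again through pairwise distinct values. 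Hence $r$ is an accumulation point of the set $\{\lamin^\A(H):H\text{ a }k\text{-uniform hypergraph}\}$, which is the assertion.

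Every step is either a direct citation or a one-line consequence of Corollary~\ref{LE relation}, so I do not expect a genuine obstacle; the one point that merits care is making sure the approximating values $\lamin^\A(T_n^{k,\frac{k}{2}})$ are genuinely distinct (not eventually constant), which is ensured by taking the $T_n$ with distinct spectral radii, as in Shearer's explicit caterpillar construction. It is also worth noting that the hypergraphs $T_n^{k,\frac{k}{2}}$ obtained this way are odd-bipartite, so that, combined with the preceding theorem covering the limit points above $-\tau^{3/2}$, this completes the description of the limit-point set of the least adjacency $H$-eigenvalue; the non-odd-bipartite witnesses accumulating at $-\tau^{3/2}=-\sqrt{2+\sqrt{5}}$ announced in the abstract require the separate construction indicated there.
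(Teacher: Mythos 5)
Your proposal is correct and follows essentially the same route as the paper: the paper derives this theorem from exactly the observation that Shearer's caterpillars realize every $r \ge \tau^{3/2}$ as a limit of adjacency spectral radii of trees, that bipartiteness gives $\lamin^\A(T)=-\rho^\A(T)$, and that Corollary \ref{LE relation} transfers $\lamin^\A(T)$ to $\lamin^\A(T^{k,\frac{k}{2}})$. Your added remark about ensuring the approximating values are pairwise distinct is a reasonable point of care that the paper leaves implicit.
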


Finally we will construct a sequence of non-bipartite graphs $G$ whose least adjacency eigenvalues converge to $-\tau^{3/2}$.
Consequently we get sequence of non-odd-bipartite hypergraphs $G^{k,k/2}$ whose least adjacency eigenvalues converge to $-\tau^{3/2}$.
Denote by $C_n+e$ the simple graph obtained from a cycle $C_n$ on $n$ vertices by appending a pendant edge $e$ at some vertex.

\begin{lemma}\label{cn+e}
$\lim \limits_{n\rightarrow \infty }\lamin^\A(C_{2n+1}+e)=-\tau ^{\frac{3}{2}}$
\end{lemma}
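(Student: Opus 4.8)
The plan is to exhibit $-\tau^{3/2}$ both as an upper bound for the limit superior and as a lower bound for the limit inferior of $\lamin^\A(C_{2n+1}+e)$. For the lower bound, note that $C_{2n+1}+e$ properly contains the cycle $C_{2n+1}$, and by Hoffman's observation (quoted at the start of this section) $\rho^\A(C_{2n+1}+e) > \tau^{3/2}$; since $C_{2n+1}+e$ is not bipartite, its spectrum is not symmetric, so this does not immediately give $\lamin^\A \ge -\tau^{3/2}$. Instead I would argue directly: the odd cycle $C_{2n+1}$ itself has least adjacency eigenvalue $2\cos\!\big(\tfrac{2n\pi}{2n+1}\big) = -2\cos\!\big(\tfrac{\pi}{2n+1}\big) \to -2 > -\tau^{3/2}$, and more to the point one can compare $C_{2n+1}+e$ with the path/caterpillar family already shown (in the preceding theorems, via Shearer--Doob) to have least eigenvalues bounded below by $-\tau^{3/2}$ in the limit. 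Concretely, I would first establish that $\lamin^\A(C_{2n+1}+e) > -\tau^{3/2}$ for every $n$ by a direct interlacing or edge-deletion argument, or by invoking the characterization that a connected graph has $\lamin^\A > -\tau^{3/2}$ unless it contains a specific forbidden configuration, and checking $C_{2n+1}+e$ avoids it.

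For the upper bound on $\limsup$, the idea is that as $n \to \infty$, the graph $C_{2n+1}+e$ "locally looks like" a long path with a pendant edge, i.e., a caterpillar $T(n_1,\ldots,n_k)$ with a single pendant edge, and such caterpillars (as in \cite{Doob,S}) have least adjacency eigenvalues approaching $-\tau^{3/2}$. More precisely, I would construct an explicit test vector $x$ on $V(C_{2n+1}+e)$, supported essentially on the pendant edge and a decaying window of the cycle around it, mimicking the eigenvector for $-\tau^{3/2}$ on the infinite ray with a pendant edge; plugging into the Rayleigh quotient $\A(C_{2n+1}+e)x^2/\|x\|^2$ gives a value converging to $-\tau^{3/2}$ from above, hence $\limsup_n \lamin^\A(C_{2n+1}+e) \le -\tau^{3/2}$. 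Combining the two bounds yields $\lim_n \lamin^\A(C_{2n+1}+e) = -\tau^{3/2}$.

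Alternatively — and this is probably cleaner — I would compute the characteristic polynomial of $C_{2n+1}+e$ in closed form. Using standard recursions for the characteristic polynomials of paths and cycles (Chebyshev-like), $\phi(C_n+e, x)$ can be written in terms of $\phi(P_{n-1},x)$ and $\phi(P_{n-2},x)$, and the least root can be analyzed via the substitution $x = -(t^{1/2}+t^{-1/2})$ that linearizes these recursions. The equation determining the relevant root then becomes a rational equation in $t$ whose solutions, as $n\to\infty$, converge to the root $t = \tau$ (equivalently the Hoffman value $\beta_\infty$), giving $x \to -(\tau^{1/2}+\tau^{-1/2}) = -\tau^{3/2}$. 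One must check that the root being tracked is genuinely the \emph{least} eigenvalue and not some interior eigenvalue — this is where a sign/continuity argument (the least root of $\phi(C_{2n+1}+e,x)$ lies just below $-\tau^{3/2}$... actually just above, on the correct side) is needed.

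The main obstacle is controlling the \emph{side} from which convergence occurs and ruling out that $\lamin^\A(C_{2n+1}+e)$ dips below $-\tau^{3/2}$: because $C_{2n+1}+e$ is non-bipartite, the clean symmetry argument $\lamin^\A = -\rho^\A$ used for trees is unavailable, so the lower bound $\lamin^\A(C_{2n+1}+e) > -\tau^{3/2}$ must be proved by hand. I expect this to follow either from the Smith/forbidden-subgraph classification of graphs with $\lamin^\A \ge -\tau^{3/2}$, or from a monotonicity argument comparing $C_{2n+1}+e$ to a suitable spanning-tree caterpillar for which the bound is already known; the rest of the argument (the Rayleigh-quotient upper bound, or the characteristic-polynomial root-tracking) is routine.
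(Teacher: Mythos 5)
Your overall strategy (squeeze $\lamin^\A(C_{2n+1}+e)$ between two quantities converging to $-\tau^{3/2}$) matches the paper, and your upper bound via a test vector imitating the extremal tree is essentially the paper's argument; but the lower bound, which you yourself flag as ``the main obstacle,'' is left unproved, and none of the tools you name will close it. Cauchy interlacing (vertex deletion / principal submatrices) gives $\lamin^\A(G)\le\lamin^\A(G[S])$, i.e.\ it bounds the least eigenvalue of a \emph{subgraph} from below by that of the whole graph --- the wrong direction for what you need. Edge deletion has no fixed-sign effect on the least eigenvalue in general, so ``comparing $C_{2n+1}+e$ to a spanning-tree caterpillar'' is not a monotonicity argument one can simply invoke. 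And there is no forbidden-subgraph characterization of graphs with $\lamin^\A\ge-\tau^{3/2}$ available here (the classical classification of that type is for $\lamin^\A\ge-2$); the paper certainly uses no such result. So as written the proposal establishes only $\limsup_n\lamin^\A(C_{2n+1}+e)\le-\tau^{3/2}$.

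The paper closes the gap with a symmetry trick that your sketch misses. Set $T_{2n+1}:=C_{2n+1}+e-v_{n+1}v_{n+2}$, deleting the cycle edge antipodal to the pendant edge; $T_{2n+1}$ is a bipartite caterpillar whose spectral radius increases to $\tau^{3/2}$ by Hoffman, so $\lamin^\A(T_{2n+1})=-\rho^\A(T_{2n+1})>-\tau^{3/2}$. The reflection automorphism of $C_{2n+1}+e$ fixing $v_0,v_1$ forces a least unit eigenvector $x$ to satisfy $x_{v_{n+1}}=x_{v_{n+2}}$, whence
$$\lamin^\A(C_{2n+1}+e)=x^{T}A(T_{2n+1})x+2x_{v_{n+1}}x_{v_{n+2}}\ge x^{T}A(T_{2n+1})x\ge\lamin^\A(T_{2n+1})>-\tau^{3/2},$$
because the deleted edge contributes the nonnegative quantity $2x_{v_{n+1}}^{2}$. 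The same deleted-edge decomposition gives the quantitative upper bound you only gesture at: taking the least unit eigenvector $y$ of $T_{2n+1}$ as a test vector, one needs $2y_{v_{n+1}}y_{v_{n+2}}\to 0$, which the paper proves by showing $|y|$ is the Perron vector of the tree and its entries strictly decrease along the long path, so $2y_{v_{n+1}}^{2}<\tfrac{2}{2n+1}$. If you adopt the edge $v_{n+1}v_{n+2}$ as the cut and supply the symmetry and decay arguments, your outline becomes the paper's proof; without them, both bounds remain unestablished.
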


\begin{proof}
Label the vertices of $C_{2n+1}+e$ as follows:
the pendant vertex is labeled by $v_0$, starting from the vertex of degree $3$ the vertices of the cycle are labeled by $v_1, v_2, \ldots, v_{2n+1}$ clockwise.
Note that now $e=v_0v_1$.
Denote $T_{2n+1}:=C_{2n+1}+e-v_{n+1}v_{n+2}$.
Let $x$ be a unit vector corresponding to the least eigenvalue of $C_{2n+1}+e$.
By symmetry, $x_{v_k}=x_{v_{2n+3-k}}$ for $k=2,3,\ldots,n+1$; in particular $x_{v_{n+1}}=x_{v_{n+2}}$.
So
$$\lamin^\A(C_{2n+1}+e)= \sum_{uv \in E(C_{2n+1}+e)} 2x_ux_v = x^T A(T_{2n+1})x + 2 x_{v_{n+1}}x_{v_{n+2}}
> \lamin^\A(T_{2n+1}).$$

On the other hand, let $y$ be  a unit vector corresponding to the least eigenvalue of $T_{2n+1}$.
Also by symmetry, $y_{v_{n+1}}=y_{v_{n+2}}$.
As $T_{2n+1}$ is bipartite, $\rho^A(T_{2n+1})=-\lamin^A(T_{2n+1})$ and $|y|$ is the Perron vector of $\A(T_{2n+1})$.
In addition, as shown by Hoffman \cite{Hof}, $\rho^A(T_{2n+1})$ increasingly converges to $\tau^{3/2}>2$.
So, for sufficiently large $n$, $\rho^\A(T_{2n+1})>2$.
By a discussion similar in \cite[Lemma 4.7]{KF}, we get that $|y_{v_1}| > |y_{v_2}| > \cdots > |y_{v_{n+1}}|$.
Note that
$$ 1=\sum_{i=0}^{2n+1}y_{v_i}^2 > y_{v_1}^2 +2 (y_{v_2}^2+  \cdots + y_{v_{n+1}}^2) > (2n+1)y_{v_{n+1}}^2.$$
So $2 y_{v_{n+1}}^2 < \frac{2}{2n+1}$.
As $y_{v_{n+1}}=y_{v_{n+2}}$, we have
$$
\lamin^\A(C_{2n+1}+e)  = \sum_{uv \in E(C_{2n+1}+e)} 2y_uy_v
= y^T \A(T_{2n+1})y + 2 y_{v_{n+1}}y_{v_{n+2}}
 < \lamin^\A(T_{2n+1})+\frac{2}{2n+1}.
$$
By the above discussion, for sufficiently large $n$,
$$\lamin^\A(T_{2n+1})<\lamin^\A(C_{2n+1}+e)<\lamin^\A(T_{2n+1})+\frac{2}{2n+1}.$$
So
$$\lim_{n \to \infty} \lamin^\A(C_{2n+1}+e) =\lim_{n \to \infty} \lamin^\A(T_{2n+1})=-\tau^{3/2}.$$
\end{proof}

By Lemma \ref{NOB}, Lemma \ref{cn+e} and Corollary \ref{LE relation}, we get the following result.

\begin{corollary}
$-\tau ^{\frac{3}{2}}$ is a limit point of the least $H$-eigenvalue
of adjacency tensor of non-odd-bipartite hypergraphs.
\end{corollary}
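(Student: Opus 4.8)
The plan is to obtain the required sequence directly from the graphs $C_{2n+1}+e$ analysed in Lemma \ref{cn+e}, by passing to their generalized powers $(C_{2n+1}+e)^{k,k/2}$ for a fixed even $k$. First I would note that each $C_{2n+1}+e$ contains the odd cycle $C_{2n+1}$, hence is non-bipartite; so by Lemma \ref{NOB} the $k$-uniform hypergraph $(C_{2n+1}+e)^{k,k/2}$ is non-odd-bipartite. This already provides a family of non-odd-bipartite hypergraphs indexed by $n$, which is the class of objects referred to in the statement.

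Next I would invoke Corollary \ref{LE relation} for each member of this family, which gives the identity
$$\lamin^\A\big((C_{2n+1}+e)^{k,k/2}\big)=\lamin^\A(C_{2n+1}+e)\qquad (n=1,2,\ldots).$$
Feeding in Lemma \ref{cn+e}, namely $\lim_{n\to\infty}\lamin^\A(C_{2n+1}+e)=-\tau^{3/2}$, we obtain
$$\lim_{n\to\infty}\lamin^\A\big((C_{2n+1}+e)^{k,k/2}\big)=-\tau^{3/2},$$
so $-\tau^{3/2}$ is a limit point of the least adjacency $H$-eigenvalues of non-odd-bipartite hypergraphs.

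There is essentially no remaining obstacle: all of the analytic content has been discharged inside Lemma \ref{cn+e}, while Lemma \ref{NOB} and Corollary \ref{LE relation} merely transport that content from simple graphs to hypergraphs. The only point worth recording is that $-\tau^{3/2}$ is a genuine accumulation point and not a value attained along the family: from the estimates in the proof of Lemma \ref{cn+e} one has $\lamin^\A(C_{2n+1}+e)>\lamin^\A(T_{2n+1})=-\rho^\A(T_{2n+1})>-\tau^{3/2}$ for all large $n$, since $\rho^\A(T_{2n+1})$ increases strictly to $\tau^{3/2}$; hence the numbers $\lamin^\A((C_{2n+1}+e)^{k,k/2})$ take infinitely many distinct values, all strictly exceeding $-\tau^{3/2}$, and they approach $-\tau^{3/2}$ without ever equalling it.
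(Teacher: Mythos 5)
Your proof is correct and follows exactly the paper's route: the paper likewise derives the corollary by combining Lemma \ref{NOB}, Lemma \ref{cn+e} and Corollary \ref{LE relation} applied to the family $(C_{2n+1}+e)^{k,k/2}$. Your closing observation that the values strictly exceed $-\tau^{3/2}$ is a harmless extra detail not present in the paper.
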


\small

\end{document}